\newtheorem{theorem}{Theorem}[section]
\newtheorem{lemma}[theorem]{Lemma}
\newtheorem{corollary}[theorem]{Corollary}
\newtheorem{remark}[theorem]{Remark}
\begin{document}
\setcounter{page}{1}
\title{Translating solutions of the nonparametric mean curvature flow with nonzero Neumann boundary data in product manifold $M^{n}\times\mathbb{R}$}
\author{Ya Gao,~~Yi-Juan Gong,~~Jing Mao$^{\ast}$}
\date{}
\protect\footnotetext{\!\!\!\!\!\!\!\!\!\!\!\!{$^{\ast}$Corresponding
author}\\
{MSC 2020:} 53C42, 53B20, 35B50, 35K93.
\\
{Key Words:} Translating solutions, singularity, nonparametric mean
curvature flow, convexity, Ricci curvature.}
\maketitle ~~~\\[-15mm]
\begin{center}{\footnotesize
Faculty of Mathematics and Statistics, \\
Key Laboratory of Applied Mathematics of Hubei Province, \\
Hubei University, Wuhan 430062, China\\
Emails: jiner120@163.com, jiner120@tom.com }
\end{center}

%\\[5mm]
\begin{abstract}
In this paper, we can prove the existence of translating solutions
to the nonparametric mean curvature flow with \emph{nonzero} Neumann
boundary data in a prescribed product manifold
$M^{n}\times\mathbb{R}$, where $M^{n}$ is an $n$-dimensional
($n\geq2$) complete Riemannian manifold with nonnegative Ricci
curvature, and $\mathbb{R}$ is the Euclidean $1$-space.
\end{abstract}

\markright{\sl\hfill  Y. Gao, Y.-J. Gong, J. Mao\hfill}

\section{Introduction}
\renewcommand{\thesection}{\arabic{section}}
\renewcommand{\theequation}{\thesection.\arabic{equation}}
\setcounter{equation}{0} \setcounter{maintheorem}{0}

The mean curvature flow (MCF for short) is one of the most important
\emph{extrinsic} curvature flows and has many nice applications. For
instance, by using the curve shortening flow (i.e., the
lower-dimensional case of MCF), Topping \cite{pt} successfully gave
an isoperimetric inequality on simply connected surfaces with
Gaussian curvature satisfying some integral precondition. This
result
 extends those isoperimetric inequalities (introduced in
detail in, e.g., \cite{b1,o1}) obtained separately by Alexandrov,
Fiala-Huber, Bol, and Bernstein-Schmidt. Applying the long-time
existence and convergence conclusions of graphic MCF of any
codimension in prescribed product manifolds (see \cite{mtw1}), Wang
\cite{mtw2} showed that for a bounded $C^2$ convex domain
$\mathbb{D}$ (with diameter $\delta$ and boundary
$\partial\mathbb{D}$) in the Euclidean $n$-space $\mathbb{R}^n$ and
$\phi:\partial\mathbb{D}\rightarrow\mathbb{R}^m$ a continuous map,
there exists a map $\psi:\mathbb{D}\rightarrow\mathbb{R}^m$, with
$\psi|_{\partial\mathbb{D}}=\phi$ and with the graph of $\psi$ a
minimal submanifold in $\mathbb{R}^{n+m}$, provided
$\psi|_{\partial\mathbb{D}}$ is a smooth map and
$8n\delta\sup_{\mathbb{D}}|D^{2}\psi|+\sqrt{2}\sup_{\partial\mathbb{D}}|D\psi|<1$.
This conclusion provides classical solutions to the \emph{Dirichlet
problem for minimal surface systems in arbitrary codimensions} for a
class of boundary maps. Specially, when $m=1$, the existence of
$\psi$ was obtained by Jenkins and Serrin \cite{js} already.
Inspired by Wang's work mentioned above, by applying the spacelike
MCF in the Minkowski space $\mathbb{R}^{n+m,n}$, Mao \cite{jm1} can
successfully get the existence of $\psi$ for maximal spacelike
submanifolds (with index $n$) in $\mathbb{R}^{n+m,n}$.

In the early study of the theory of MCF, a classical result from
Huisken \cite{gh1} says that a given compact strictly convex
hypersurface $M^n$ in $\mathbb{R}^{n+1}$ evolving along the MCF
would contract to a single point at finite time. More precisely, let
$X(\cdot,t)=X_{t}$ be a one-parameter family of immersions
$X_{t}:M^{n}\rightarrow\mathbb{R}^{n+1}$ whose images
$M_{t}^{n}=X_{t}(M^{n})$ satisfy
\begin{eqnarray} \label{MCF}
 \frac{\partial}{\partial t}X(x,t)=\vec{H},\qquad \qquad &\mathrm{on} ~ M^{n}\times[0,T)
 \end{eqnarray}
for some $T>0$, with the initial condition $X(x,0)=X_{0}(x)$ on
$M^n$, where $\vec{H}$ is the mean curvature vector of the evolving
hypersurface $M_{t}^{n}$, by using the method of $L^{p}$ estimates,
Huisken \cite{gh1} proved that if $M^{n}$ is a compact strictly
convex hypersurface in $\mathbb{R}^{n+1}$, the MCF equation
(\ref{MCF}), with the initial condition, has a unique smooth
solution on the finite time interval $[0,T_{\max})$ with
$T_{\max}<\infty$, and the evolving hypersurfaces $M^{n}_{t}$
contract to a single point as $t\rightarrow T_{\max}$. By imposing a
pinching condition on the second fundamental form of the initial
hypersurface, Huisken \cite{gh2} has extended the above conclusion
to a more general setting that the ambient space $\mathbb{R}^{n+1}$
was replaced by a class of smooth complete Riemannian manifolds
$N^{n+1}$ having some uniform bounds for curvatures and injectivity
radius (of course, $N^{n+1}$ covers $\mathbb{R}^{n+1}$ as a special
case). From these two facts, one might know that generally the MCF
would occur singularity at finite time. A natural question is:

\vspace{5mm}

\textbf{Problem 1}. \emph{When does the MCF exist for all the time?}

 $\\$That is to say, under specified settings, there is no singularity
formed during the evolution of MCF.

If there exists a constant vector $V$ such that
\begin{eqnarray*}
\vec{H}=V^{\perp},
\end{eqnarray*}
then the evolving submanifold $X_{t}:M^{n}\rightarrow
\mathbb{R}^{n+m}$ is called a \emph{translating soliton} of the MCF
equation (\ref{MCF}). Here $(\cdot)^{\perp}$ denotes the normal
projection of a prescribed vector to the normal bundle of
$M_{t}^{n}$ in $\mathbb{R}^{m+n}$. It is easy to see that the
translating soliton gives an eternal solution $X_{t}=X_{0}+tV$ to
(\ref{MCF}), which is called the translating solution. Translating
solitons play an important role in the study of type-II
singularities of the MCF. For instance, Angenent and Vel\'{a}zquez
\cite{av1,av2} gave some examples of convergence which implies that
type-II singularities of the MCF there are modeled by translating
surfaces. Clearly, the existence of translation solutions to the
equation (\ref{MCF}) can give a positive answer to \textbf{Problem
1}.

Huisken \cite{gh3} considered the evolution of graphic hypersurfaces
 over a bounded domain (with smooth boundary) in $\mathbb{R}^{n}$
 under the MCF with a vanishing Neumann boundary condition (NBC for
 short), and proved that the flow exists for all the time and
 evolving graphic hypersurfaces in $\mathbb{R}^{n+1}$ converge to the graph of a
 constant function as $t\rightarrow\infty$. The vanishing NBC here
 has strong geometric meaning, that is, \emph{the evolving graphic hypersurface is perpendicular with the parabolic boundary during the
 evolution (or the contact angle between the evolving graphic hypersurface and parabolic boundary is $\pi/2$)}. Is the vanishing NBC necessary? What about the
 non-vanishing case? There are many literatures working on this
 direction and we would like to mention some of them. When the dimension $n$ satisfies $n=1$ or $n=2$, Altschuler
 and Wu \cite{aw,aw1} gave a positive answer to these questions. In
 fact, they proved:

 \begin{itemize}

\item when $n=1$, a graphic curve defined over an open bounded interval
evolves along the flow given by a class of quasilinear parabolic
equations (of course, including the MCF as a special case), with
arbitrary contact angle (i.e., with nonzero NBC), would exist for
all the time, and the evolving curves converge  as
$t\rightarrow\infty$ to a solution moving by translation with speed
uniquely determined by the boundary data.

\item when $n=2$, a graphic surface defined over a compact strictly
convex domain (with smooth boundary) in $\mathbb{R}^2$ evolves along
the MCF, with arbitrary contact angle (i.e., with nonzero NBC),
would exist for all the time, and the evolving surfaces converge as
$t\rightarrow\infty$ to a surface (unique up to translation) which
moves at a constant speed (uniquely determined by the boundary
data).

 \end{itemize}
For the higher dimensional case, Guan \cite{gb} have given a partial
answer. In fact, he can get the long-time existence of the evolution
of graphic hypersurfaces, defined over a bounded domain (with smooth
boundary) in $\mathbb{R}^{n}$,
 under a nonparametric mean curvature type flow (i.e., the MCF with a forcing term given by an \emph{admissible function} defined therein) with nonzero NBC. However, the asymptotic behavior of
 the flow cannot be obtained in his setting.  Zhou \cite{hyz} extended Altschuler-Wu's conclusion \cite{aw1} to the situation that graphic surfaces were defined over a compact strictly
convex domain (with smooth boundary) in $2$-dimensional Riemannian
surfaces $M^2$ with nonnegative Ricci curvature, and extended Guan's
conclusion \cite{gb} to the situation that graphic hypersurfaces
were defined over a bounded domain (with smooth boundary) in
$n$-dimensional ($n\geq2$) Riemannian manifolds $M^{n}$. However,
similar to Guan's work \cite{gb}, Zhou \cite{hyz} \emph{also} cannot
give the asymptotic behavior of the MCF with a forcing term (given
by an \emph{admissible function}) and with nonzero NBC in product
manifolds $M^{n}\times\mathbb{R}$. Recently, Ma, Wang and Wei
\cite{mww} improved Huisken's work \cite{gh3} to a more general
setting that the vanishing NBC therein can be replaced by a nonzero
NBC of specialized type.

Our purpose here is trying to extend the main conclusion in
\cite{mww} to a more general case -- the ambient space
$\mathbb{R}^{n+1}$ will be replaced by product manifolds of type
$M^{n}\times\mathbb{R}$, where $M^{n}$ is a complete Riemannian
manifold of nonnegative Ricci curvature.

Throughout this paper, let $(M^{n},\sigma)$ be a complete
$n$-manifold ($n\geq2$) with the Riemannian metric $\sigma$, and let
$\Omega\subset M^{n}$ be a compact strictly convex domain with
smooth boundary $\partial\Omega$. Denote by
$\left(U_{A};w^{1}_{A},w^{2}_{A},\cdots,w^{n}_{A}\right)$ the local
coordinate coverings of $M$, and $\frac{\partial}{\partial
w^{i}_{A}}$, $i=1,2,\cdots,n$, the corresponding coordinate vector
fields, where $A\in I\subseteq N$ with $N$ the set of all positive
integers. For simplicity, we just write
$\{w^{1}_{A},w^{2}_{A},\cdots,w^{n}_{A}\}$ as
$\{w^{1},w^{2},\cdots,w^{n}\}$ to represent the local coordinates on
$M$, and write $\frac{\partial}{\partial w^{i}_{A}}$ as
$\frac{\partial}{\partial w^{i}}$ or $\partial_{i}$. In this
setting, the metric $\sigma$ should be
$\sigma=\sum_{i,j=1}^{n}\sigma_{ij}dw^{i}\otimes dw^{j}$ with
$\sigma_{ij}=\sigma(\partial_{i},\partial_{j})$. Denote by $D$,
$D^{\partial\Omega}$ the covariant derivatives on $\Omega$ and
$\partial\Omega$ respectively. Now, we would like to consider, along
the MCF (\ref{MCF}) with nonzero NBC, the evolution of graphic
hypersurfaces, defined over $\Omega$, in product manifold
$M^{n}\times\mathbb{R}$ with the product metric
$\overline{g}=\sigma_{ij}dw^{i}\otimes dw^{j}+ds\otimes ds$. More
precisely, given a smooth\footnote{In fact, it is not necessary to
impose smoothness assumption on the initial hypersurface
$\mathcal{G}$. The $C^{2,\alpha}$-regularity for $\mathcal{G}$ is
enough to get all the estimates in the sequel. However, in order to
avoid the boring regularity arguments, which is not necessary, here
we assume $\mathcal{G}$ is smooth.} graphic hypersurface
$\mathcal{G}\subset M^{n}\times\mathbb{R}$ defined over $\Omega$,
then there exists a smooth function $u_{0}\in
C^{\infty}(\overline{\Omega})$ such that $\mathcal{G}$ can be
represented by $\mathcal{G}:=\{(x,u_{0}(x))|x\in\Omega\}$. It is not
hard to know that the metric of  $\mathcal{G}$ is given by
$g=i^{\ast}\overline{g}$, where $i^{\ast}$ is the pullback mapping
of the immersion $i:\mathcal{G}\hookrightarrow
M^{n}\times\mathbb{R}$, tangent vectors are given by
\begin{eqnarray*}
\vec{e_{i}}=\partial_{i}+D_{i}u\partial_{s}, \qquad  i=1,2,\cdots,n,
\end{eqnarray*}
 and the corresponding upward unit normal vector is given by
\begin{eqnarray*}
\vec{\gamma}=-\frac{\sum\limits_{i=1}^{n}D^{i}u\partial_{i}-\partial_s}{\sqrt{1+|Du|^2}},
\end{eqnarray*}
where $D^{j}u=\sum_{i=1}^{n}\sigma^{ij}D_{i}u$. Denote by $\nabla$
the covariant derivative operator on $M^{n}\times\mathbb{R}$, and
then the second fundamental form $h_{ij}d\omega^{i}\otimes
d\omega^{j}$ of $\mathcal{G}$ is given by
\begin{eqnarray*}
h_{ij}=-\langle\nabla_{\vec{e}_i}\vec{e}_j,\vec{\gamma}\rangle_{\overline{g}}=-\frac{D_{i}D_{j}u}{\sqrt{1+|Du|^2}}.
\end{eqnarray*}
Moreover, the scalar mean curvature of $\mathcal{G}$ is
\begin{eqnarray} \label{smc}
\qquad
H=\sum_{i=1}^{n}h^i_i=-\frac{\sum\limits_{i,k=1}^{n}g^{ik}D_{i}D_{k}u}{\sqrt{1+|Du|^2}}=-\frac{\sum\limits_{i,k=1}^{n}\left(\sigma^{ik}-\frac{D^{i}uD^{k}u}{1+|Du|^{2}}\right)D_{i}D_{k}u}{\sqrt{1+|Du|^2}}.
\end{eqnarray}
Hence, in our situation here, the evolution of $\mathcal{G}$ under
the MCF with nonzero NBC in $M^{n}\times\mathbb{R}$ with the metric
$\overline{g}$ can be reduced to solvability of the following
initial-boundary value problem (IBVP for short)
\begin{eqnarray*}
(\sharp)\qquad \left\{
\begin{array}{lll}
{\frac{\partial u}{\partial t}=\sum\limits_{i,j=1}^{n}\left(\sigma^{ij}-\frac{D^{i}uD^{j}u}{1+|Du|^{2}}\right)D_{i}D_{j}u} \qquad \qquad &\mathrm{in} ~ \Omega\times[0,T),\\
D_{\vec{\nu}}u=\phi(x) \qquad \qquad &\mathrm{on} ~
\partial \Omega\times[0,T),\\
u(\cdot,0)=u_{0}(\cdot)\qquad \qquad &\mathrm{on} ~\Omega_{0},
\end{array}
\right.
\end{eqnarray*}
where $\vec{\nu}$ is the inward unit normal vector of
$\partial\Omega$, $\Omega_{t}=\Omega\times\{t\}$ is a slice in
$\Omega\times[0,T)$, $u_{0}(x)\in C^{\infty}(\overline{\Omega})$ and
$\phi(x)\in C^{\infty}(\overline{\Omega})$ are smooth function
satisfying
\begin{eqnarray} \label{cc}
u_{0,\vec{\nu}}=\phi(x)\qquad &\mathrm{on} ~\partial\Omega.
 \end{eqnarray}
Here (\ref{cc}) is called \emph{compatibility condition} of the
system ($\sharp$), and a comma ``," in the subscript means doing
covariant derivative w.r.t. a prescribed tensor. This convention
will also be used in the sequel. For the IBVP ($\sharp$), we can
prove:

\begin{theorem} \label{theorem1-1}
If the Ricci curvature of $M^{n}$ is nonnegative, then for the IBVP
($\sharp$), we have

(1) the IBVP ($\sharp$) has a smooth solution $u(x,t)$ on
$\overline{\Omega}\times[0,\infty)$;

(2) the smooth solution $u(x,t)$ converges as $t\rightarrow\infty$
to $\lambda t+w(x)$, i.e.,
\begin{eqnarray*}
\lim\limits_{t\rightarrow\infty}\|u(x,t)-(\lambda
t+w(x))\|_{C^{0}(\overline{\Omega})}=0,
\end{eqnarray*}
where $\lambda\in\mathbb{R}$ and $w\in
C^{2,\alpha}(\overline{\Omega})$ (unique up to a constant) solving
the following boundary value problem (BVP for short)
\begin{eqnarray*}
(\ddag)\qquad \left\{
\begin{array}{ll}
\sum\limits_{i,j=1}^{n}\left(\sigma^{ij}-\frac{D^{i}uD^{j}u}{1+|Du|^{2}}\right)D_{i}D_{j}u=\lambda \qquad \qquad &\mathrm{in} ~ \Omega,\\
D_{\vec{\nu}}u=\phi(x) \qquad \qquad &\mathrm{on} ~
\partial \Omega.
\end{array}
\right.
\end{eqnarray*}
Here $0<\alpha<1$ and $\lambda$ is called the additive eigenvalue of
the BVP ($\ddag$).
\end{theorem}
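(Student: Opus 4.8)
The plan is to run the standard program for quasilinear parabolic Neumann problems of mean-curvature type: short-time existence, uniform-in-$t$ a priori estimates, continuation to $[0,\infty)$, and then a decay estimate for $u_t$ that extracts the translating profile $(\lambda,w)$. Since the principal coefficient $\sigma^{ij}-D^{i}uD^{j}u/(1+|Du|^{2})$ is positive definite and the boundary operator $D_{\vec{\nu}}$ is uniformly oblique ($\vec{\nu}$ being the inward unit normal), the IBVP $(\sharp)$ is uniformly parabolic as long as $|Du|$ stays bounded; together with the compatibility condition \eqref{cc} this gives, by the standard theory for oblique parabolic problems, a unique smooth solution on a maximal interval $[0,T_{\max})$. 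Everything then hinges on showing $T_{\max}=\infty$ via estimates that do not deteriorate in time.

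The estimates I would establish, in order, are: \emph{(i) a speed bound.} Differentiating $(\sharp)$ in $t$, the function $v:=u_{t}$ solves a linear parabolic equation with no zero-order term and with homogeneous Neumann data $D_{\vec{\nu}}v=0$; the maximum principle (an interior spatial maximum of $v$ forces $v_{t}\le 0$ there, while a boundary maximum is excluded by the Hopf lemma together with $D_{\vec{\nu}}v=0$) shows that $\max_{\overline{\Omega}}v(\cdot,t)$ is non-increasing and $\min_{\overline{\Omega}}v(\cdot,t)$ is non-decreasing, hence $|u_{t}|\le\sup_{\overline{\Omega}}|u_{t}(\cdot,0)|=:C_{0}$, a quantity read off from $u_{0}$ via $(\sharp)$ at $t=0$; in particular $u$ grows at most linearly. \emph{(ii) the gradient estimate} $\sup_{\overline{\Omega}\times[0,T_{\max})}|Du|\le C$. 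This splits into a boundary part and an interior part. Near $\partial\Omega$ one builds a barrier from the distance function $d(\cdot,\partial\Omega)$, which is concave near the boundary precisely because $\partial\Omega$ is strictly convex, taking the form $\phi\,d+C d^{2}$ up to the correct geometric modifications; comparing $u$ with such barriers bounds $|Du|$ on $\partial\Omega\times[0,T_{\max})$. For the interior part I would use the geometric gradient function $v:=\sqrt{1+|Du|^{2}}$, whose evolution along the flow has the schematic form $\partial_{t}v=\Delta_{g}v-|A|^{2}v-\overline{\mathrm{Ric}}(\vec\gamma,\vec\gamma)v+2v^{-1}|\nabla v|^{2}$; in the product manifold $M^{n}\times\mathbb{R}$ one has $\overline{\mathrm{Ric}}(\vec\gamma,\vec\gamma)=\mathrm{Ric}_{M}(\cdot,\cdot)\ge 0$ exactly by hypothesis, so the reaction terms are non-positive at a spatial maximum of $v$, forcing the maximum of $|Du|$ to occur at $t=0$ or on $\partial\Omega$, where it is already controlled. \emph{(iii) higher regularity and continuation.} With $|Du|$ bounded the equation is uniformly parabolic with bounded measurable coefficients; Krylov--Safonov estimates together with Schauder theory for oblique boundary conditions give a uniform $C^{2,\alpha}$ bound, and bootstrapping yields uniform $C^{\infty}$ bounds on $\overline{\Omega}\times[\tau,T_{\max})$ for each $\tau>0$. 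Standard continuation then gives $T_{\max}=\infty$, proving part (1).

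For part (2), with uniform parabolicity and uniform $C^{2,\alpha}$ bounds in hand, $v=u_{t}$ is a bounded solution of a uniformly parabolic linear equation with homogeneous Neumann data, and for $t\ge t_{0}$ the functions $M(t_{0})-v(\cdot,t)$ and $v(\cdot,t)-m(t_{0})$ are non-negative solutions (where $M(t)=\max v(\cdot,t)$, $m(t)=\min v(\cdot,t)$ are the monotone quantities from step (i)). The parabolic Harnack/oscillation estimate for Neumann problems then gives $\mathrm{osc}_{\overline{\Omega}}\,u_{t}(\cdot,t+\tau)\le\theta\,\mathrm{osc}_{\overline{\Omega}}\,u_{t}(\cdot,t)$ for some $\theta\in(0,1)$, $\tau>0$, hence $\mathrm{osc}_{\overline{\Omega}}\,u_{t}(\cdot,t)\le Ce^{-\delta t}$; since $M(t)$ decreases, $m(t)$ increases, and their difference tends to $0$, both converge to a common constant $\lambda$, so $u_{t}\to\lambda$ uniformly and exponentially. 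Consequently $\int_{t}^{\infty}|u_{s}-\lambda|\,ds\le Ce^{-\delta t}\to 0$, so $u(\cdot,t)-\lambda t$ is uniformly Cauchy and converges in $C^{0}(\overline{\Omega})$ to some $w$; the uniform $C^{2,\alpha}$ bound upgrades this (along a subsequence, then for the full limit) to convergence in $C^{2}(\overline{\Omega})$, so $w\in C^{2,\alpha}(\overline{\Omega})$ and passing to the limit in $(\sharp)$ shows $(\lambda,w)$ solves $(\ddag)$. Uniqueness follows from the maximum principle: if $(\lambda_{1},w_{1})$ and $(\lambda_{2},w_{2})$ both solve $(\ddag)$, then evaluating at an interior extremum of $w_{1}-w_{2}$ (using that the operator is monotone in the Hessian at a point where the gradients agree) rules out $\lambda_{1}\neq\lambda_{2}$, after which $w_{1}-w_{2}$ satisfies a homogeneous linear elliptic equation with homogeneous Neumann data and the strong maximum principle plus Hopf forces it to be constant.

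I expect the boundary gradient estimate to be the main obstacle. The nonzero Neumann datum $\phi$ destroys the perpendicularity that made Huisken's vanishing-NBC barriers work, so one must design barriers adapted simultaneously to $\phi$ and to the intrinsic geometry of $(\Omega,\sigma)$, and this is exactly the place where the strict convexity of $\partial\Omega$ is indispensable (it is what makes the relevant distance-function terms have the right sign). The interior gradient estimate is also delicate, but there the role of $\mathrm{Ric}_{M}\ge 0$ is transparent, and the remaining steps are, modulo the adaptation of Euclidean parabolic theory to the Riemannian base, routine.
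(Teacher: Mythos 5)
Your outline of part (1) up to the gradient estimate, and of part (2), is sound, but the boundary gradient estimate as you describe it has a genuine gap. You propose to bound $|Du|$ on $\partial\Omega$ by comparing $u$ with barriers of the form $\phi\,d + C\,d^{2}$ built from the boundary distance function. That is the template for \emph{Dirichlet} problems, where $u$ is prescribed on $\partial\Omega$ and barriers control the normal derivative. Here the situation is reversed: the normal derivative $D_{\vec{\nu}}u=\phi$ is already given, and what must be controlled on $\partial\Omega$ is the \emph{tangential} gradient, which a one-sided comparison of $u$ against a barrier cannot see. Consequently your split ``barrier near $\partial\Omega$ $+$ angle-function maximum principle in the interior'' does not close: the interior step only forces the spatial maximum of $v=\sqrt{1+|Du|^{2}}$ to lie on $\partial\Omega$ or at $t=0$, and you have no mechanism to rule out the boundary. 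The paper's Lemma 2.2 resolves exactly this: one introduces $\omega=u+\phi\beta$ with $\beta$ a strictly convex defining function ($\beta|_{\partial\Omega}=0$, $D_{\vec{\nu}}\beta=-1$), so that $D_{\vec{\nu}}\omega|_{\partial\Omega}=0$, and then runs a single maximum-principle argument on $\Phi=\log|D\omega|^{2}+\zeta\beta$ over all of $\overline{\Omega}\times[0,T']$. The zero Neumann condition for $\omega$ together with the strict convexity of $\partial\Omega$ (second fundamental form $h^{\partial\Omega}\ge\kappa_{1}I$) shows, via a Hopf-type computation, that $\Phi$ cannot attain its maximum on $\partial\Omega$ for small $\zeta$; the interior computation then uses $\mathrm{Ric}_{M}\ge0$ to control the curvature term $2g^{ij}\omega^{k}R^{1}_{ikj}u_{1}$. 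That reduction to a homogeneous Neumann condition for the auxiliary function $\omega$ is the missing idea in your proposal.

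Your part (2) takes a route different from the paper's and, granting the uniform $C^{2,\alpha}$ bounds from part (1), it does work: $u_{t}$ solves a uniformly parabolic linear equation with Hölder coefficients and homogeneous conormal data, $\max u_{t}$ is nonincreasing and $\min u_{t}$ nondecreasing, and the boundary Harnack inequality gives geometric decay of $\mathrm{osc}_{\overline{\Omega}}\,u_{t}$, hence $u_{t}\to\lambda$ exponentially and $u-\lambda t\to w$ in $C^{0}$, which upgrades to $C^{2}$ convergence and identifies $(\lambda,w)$ as a solution of $(\ddag)$. The paper instead constructs $(\lambda,w)$ independently by elliptic regularization ($\varepsilon u=g^{ij}D_{i}D_{j}u$, $\varepsilon\to0$, Lemma 3.1 and Theorem 3.2), then proves $|u-\lambda t|\le c_{10}$ by comparison with $w+\lambda t$ (Corollary 3.3), and obtains convergence by a soft compactness argument on translates $u(\cdot,t+t_{n})-\lambda t_{n}$ together with the strong maximum principle (Lemma 3.4). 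Your route yields an exponential rate that the paper does not claim, at the cost of invoking parabolic Harnack up to the Neumann boundary; the paper's route avoids Harnack entirely but only gives convergence with no rate. Either is acceptable, but you should be explicit about the Harnack-up-to-the-boundary input you are using.
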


\begin{remark}
\rm{ (I) By (\ref{smc}), it is easy to know that
\begin{eqnarray*}
\sum\limits_{i,j=1}^{n}\left(\sigma^{ij}-\frac{D^{i}uD^{j}u}{1+|Du|^{2}}\right)D_{i}D_{j}u=H\cdot\sqrt{1+|Du|^{2}}=\mathrm{div}\left(\frac{Du}{\sqrt{1+|Du|^{2}}}\right)\cdot\sqrt{1+|Du|^{2}},
\end{eqnarray*}
which, substituting into the first equation of ($\ddag$), implies
\begin{eqnarray*}
\mathrm{div}\left(\frac{Dw}{\sqrt{1+|Dw|^{2}}}\right)=\frac{\lambda}{\sqrt{1+|Dw|^{2}}},
\end{eqnarray*}
where $u=w(x)$ is the solution to the BVP ($\ddag$). Integrating the
above equality and using the divergence theorem, one can get
\begin{eqnarray*}
\lambda=-\frac{\int_{\partial\Omega}\frac{\phi(x)}{\sqrt{1+|Dw|^{2}}}}{\int_{\Omega}(1+|Dw|^{2})^{-\frac{1}{2}}}.
\end{eqnarray*}
Clearly, if $\phi(x)\equiv0$, then $\lambda=0$. Moreover, in this
setting, for the IBVP ($\sharp$), as $t\rightarrow\infty$, its
smooth solution $u(x,t)$ would converge to a constant function
defined over
$\Omega\subset M^{n}$. \\
 (II) We would like to mention one
thing, that is, if $M^{n}=\mathbb{R}^n$ and $\phi(x)\equiv0$, then
Theorem \ref{theorem1-1} here degenerates into Huisken's main
conclusion in \cite{gh3}; if $M^{n}=\mathbb{R}^n$, our main conclusion here becomes exactly \cite[Theorems 1.1 and 1.2]{mww}.\\
(III) Recent years, the study of submanifolds of constant curvature
in product manifolds attracts many geometers' attention. For
instance, Hopf in 1955 discovered that the complexification of the
traceless part of the second fundamental form of an immersed surface
$\Sigma^{2}$, with constant mean curvature (CMC for short) $H$, in
$\mathbb{R}^3$ is a holomorphic quadratic differential $Q$ on
$\Sigma^{2}$, and then he used this observation to get his
well-known conclusion that any immersed CMC sphere
$\mathbb{S}^{2}\hookrightarrow\mathbb{R}^3$ is a standard distance
sphere with radius $1/H$. By introducing a generalized quadratic
differential $\widetilde{Q}$ for immersed surfaces $\Sigma^{2}$ in
product spaces $\mathbb{S}^{2}\times\mathbb{R}$ and
$\mathbb{H}^{2}\times\mathbb{R}$, with $\mathbb{S}^{2}$,
$\mathbb{H}^{2}$ the $2$-dimensional sphere and hyperbolic surface
respectively, Abresch and Rosenberg \cite{ar} can extend Hopf's
result to CMC spheres in these target spaces. Meeks and Rosenberg
\cite{mr} successfully classified stable properly embedded
orientable minimal surfaces in the product space
$M\times\mathbb{R}$, where $M$ is a closed orientable Riemannian
surface. In fact, they proved that such a surface must be a product
of a stable embedded geodesic on $M$ with $\mathbb{R}$, a minimal
graph over a region of $M$ bounded by stable geodesics,
$M\times\{t\}$ for some $t\in\mathbb{R}$, or is in a moduli space of
periodic multigraphs parameterized by $P\times\mathbb{R}^{+}$, where
$P$ is the set of primitive (non-multiple) homology classes in
$H_{1}(M)$. Mazet, Rodr\'{\i}guez and Rosenberg \cite{lmh} analyzed
properties of periodic minimal or constant mean curvature surfaces
in the product manifold $\mathbb{H}^{2}\times\mathbb{R}$, and they
also construct examples of periodic minimal surfaces in
$\mathbb{H}^{2}\times\mathbb{R}$. In \cite{hfj}, Rosenberg, Schulze
and Spruck showed that a properly immersed minimal hypersurface in
$M\times\mathbb{R}^{+}$ equals some slice $M\times\{c\}$ when $M$ is
a complete, recurrent $n$-dimensional Riemannian manifold with
bounded curvature. Of course, for more information, readers can
check references therein of these papers. Hence, it is interesting
and important to consider submanifolds of constant curvature in the
product manifold of type $M^{n}\times\mathbb{R}$. Based on this
reason, in our setting here, it should be interesting and important
to consider the following CMC equation with nonzero NBC
\begin{eqnarray*}
(\natural)\qquad \left\{
\begin{array}{ll}
H=\mathrm{div}\left(\frac{Du}{\sqrt{1+|Du|^{2}}}\right)=\lambda \qquad \qquad &\mathrm{in} ~ \Omega,\\
D_{\vec{\nu}}u=\phi(x) \qquad \qquad &\mathrm{on} ~
\partial \Omega.
\end{array}
\right.
\end{eqnarray*}
Of course, all the symbols in the above system have the same meaning
as those in ($\ddag$). The existence and uniqueness of solution to
the BVP ($\natural$) have been obtained recently -- see \cite{gms}
for details. \\
(IV) The evolution of space-like surfaces in the Lorentz
$3$-manifold $M^{2}\times\mathbb{R}$ under the MCF with arbitrary
contact angle (of course, in this situation, the NBC is nonzero) has
been investigated in \cite{chmx}, and the long-time existence and
the existence of translating solutions to the flow have been
obtained. \\
(V) As we know, if the warping function was chosen to be a constant
function, then warped product manifolds would degenerate into
product manifolds. Hence, one might ask ``\emph{whether one could
expect to get a similar conclusion to Theorem \ref{theorem1-1} in
warped product manifolds or not?"}. By constructing an interesting
graphic hypersurface example in a prescribed warped product (see
\cite[Appendix A]{hyz}), Zhou gave a negative answer to this
question. Speaking in other words, he showed that the MCF with
nonzero NBC in
warped product manifolds would form singularities within finite time. \\
(VI). In fact, Huisken \cite{gh3} considered the following IBVP
\begin{eqnarray*}
\left\{
\begin{array}{lll}
{\frac{\partial u}{\partial t}=\sum\limits_{i,j=1}^{n}\left(\delta^{ij}-\frac{D^{i}uD^{j}u}{1+|Du|^{2}}\right)D_{i}D_{j}u} \qquad \qquad &\mathrm{in} ~ \Omega\times[0,T),\\
D_{\vec{\nu}}u=0 \qquad \qquad &\mathrm{on} ~
\partial \Omega\times[0,T),\\
u(\cdot,0)=u_{0}(\cdot)\qquad \qquad &\mathrm{on} ~\Omega_{0},
\end{array}
\right.
\end{eqnarray*}
which, as mentioned before, describes the evolution of graphic
hypersurfaces
 over $\Omega\subset\mathbb{R}^{n}$
 under the MCF with a zero NBC, and obtained the long-time existence,
 i.e., $T=\infty$. The vanishing NBC here means that
  \begin{eqnarray*}
  \langle\vec{\gamma},\vec{\nu}\rangle_{\overline{g}}=\frac{D_{\vec{\nu}}u}{\sqrt{1+|Du|^2}}=0,
  \end{eqnarray*}
  which is to say $\vec{\gamma}\perp\vec{\nu}$, i.e., the contact angle between $\vec{\gamma}$ and $\vec{\nu}$ is
  $\pi/2$. If the contact angle is arbitrary, then the corresponding
  NBC should have the form
  $D_{\vec{\nu}}u\big{|}_{\partial\Omega}=\varphi(x)\cdot\sqrt{1+|Du|^2}$  for some $\varphi(x)\in
  C^{\infty}(\overline{\Omega})$, $|\varphi(x)|\leq1$ on
  $\partial\Omega$, and
  $\varphi(x)=u_{0,\vec{\nu}}$ on
  $\partial\Omega$. Based on this reason, we can say that although
  the IBVP ($\sharp$) has nonzero NBC, the geometric meaning of the
  NBC in ($\sharp$) is not sufficient. \emph{Can we deal with the IBVP
  ($\sharp$) if the RHS of the nonzero NBC therein contains $Du$
  also?} Inspired by a recent work \cite{wwx}, Gao and Mao
  \cite{gm2} considered a generalization of the IBVP
  ($\sharp$) where the NBC can be replaced by
  \begin{eqnarray*}
D_{\vec{\nu}}u=\phi(x)\cdot\left(\sqrt{1+|Du|^2}\right)^{\frac{1-q}{2}}
  \end{eqnarray*}
for any $q>0$, and similar conclusion to Theorem \ref{theorem1-1}
could be derived.
 }
\end{remark}

This paper is organized as follows. In Section 2, the
time-derivative estimate, the gradient estimate, and the estimate
for higher-order derivatives of $u$ will be shown in detail, which
naturally lead to the long-time existence of the IBVP ($\sharp$). In
section 3, by using an approximating approach, the solvability of
the BVP ($\ddag$) can be given first, which will be used later to
get the asymptotic behavior of solutions $u$ to the IBVP ($\sharp$).

\section{The long-time existence}
\renewcommand{\thesection}{\arabic{section}}
\renewcommand{\theequation}{\thesection.\arabic{equation}}
\setcounter{equation}{0} \setcounter{maintheorem}{0}

For convenience, we use several notations as follows:
$$v=\sqrt {1+|Du|^{2}},$$
$$g_{ij}=\sigma_{ij}+D_{i}uD_{j}u,$$
$$g^{ij}=\sigma^{ij}-\frac{D^{i}uD^{j}u}{1+|Du|^{2}},$$
$$u_{t}=\frac{\partial u}{\partial t}.$$
For vectors, $V$, $W$ or matrices $A$, $B$, we shall use the
shorthand as follows:
 \begin{eqnarray*}
 \langle V,W \rangle
_{g}=\sum\limits_{i,j=1}^{n}g^{ij}V_{i}W_{j},\quad \langle
V,W\rangle
_{\sigma}=\sum\limits_{i,j=1}^{n}\sigma^{ij}V_{i}W_{j},\quad \langle
A,B\rangle
_{g,\sigma}=\sum\limits_{i,j,k,l=1}^{n}g^{ij}\sigma^{kl}A_{ik}B_{jl}.
 \end{eqnarray*}

First, by applying a similar method to that in the proof of
\cite[Lemma 2.2]{aw1}, we would like to show the time-derivative
estimate for $u$.

\begin{lemma} \label{lemma2.1}
For the IBVP ($\sharp$), we have
 \begin{eqnarray*}
\sup\limits_{\overline{\Omega}\times[0,T]}|u_{t}|^{2}=\sup\limits_{\Omega_{0}}|u_{t}|^{2}.
 \end{eqnarray*}
That is to say, there exists some positive constant
$c_{0}=c_{0}(u_{0})\in\mathbb{R}^{+}$ such that for any
$(x,t)\in\overline{\Omega}\times[0,T]$, we have
\begin{eqnarray*}
|u_{t}|^{2}(x,t)\leq c_{0}.
\end{eqnarray*}
\end{lemma}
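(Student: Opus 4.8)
The plan is to use the parabolic maximum principle applied to the quantity $|u_t|^2$, or equivalently $u_t$ itself, exploiting that $u_t$ satisfies a linear parabolic equation obtained by differentiating the PDE in ($\sharp$) with respect to $t$. First I would differentiate the evolution equation $u_t = g^{ij}D_iD_ju$ in time. Since $g^{ij}$ depends on $Du$, the chain rule gives $\partial_t(u_t) = g^{ij}D_iD_j(u_t) + \frac{\partial g^{ij}}{\partial(D_k u)} D_k(u_t)\, D_iD_j u$, which is a linear uniformly parabolic equation for $w:=u_t$ of the form $\partial_t w = g^{ij}D_iD_j w + b^k D_k w$ with bounded (for fixed time slab, on a smooth solution) coefficients $b^k$. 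Then $W:=|u_t|^2 = w^2$ satisfies $\partial_t W = g^{ij}D_iD_j W - 2g^{ij}D_iw\,D_jw + b^k D_k W \le g^{ij}D_iD_j W + b^k D_k W$, so $W$ is a subsolution of a linear parabolic operator and attains its maximum on the parabolic boundary $\overline\Omega\times\{0\}\ \cup\ \partial\Omega\times[0,T]$.

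The second and crucial step is to rule out the lateral boundary $\partial\Omega\times[0,T]$ as the location of the maximum, which is where the Neumann condition enters. Differentiating the boundary condition $D_{\vec\nu}u = \phi(x)$ in time gives $D_{\vec\nu}(u_t) = 0$ on $\partial\Omega\times[0,T]$, hence $D_{\vec\nu} W = 2u_t\, D_{\vec\nu}(u_t) = 0$ on the lateral boundary. Now suppose the maximum of $W$ over $\overline\Omega\times[0,T]$ were attained at a point $(x_0,t_0)$ with $x_0\in\partial\Omega$ and $t_0>0$ and not already attained on $\Omega_0$. Since $\vec\nu$ is the inward normal, the Hopf boundary point lemma (applied to the subsolution $W$ of the uniformly parabolic operator $\partial_t - g^{ij}D_iD_j - b^kD_k$) forces $D_{\vec\nu}W(x_0,t_0) < 0$ strictly, unless $W$ is constant — contradicting $D_{\vec\nu}W = 0$. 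Therefore $\sup_{\overline\Omega\times[0,T]}|u_t|^2 = \sup_{\Omega_0}|u_t|^2$, and one sets $c_0 = \sup_{\Omega_0}|u_t|^2 = \sup_{\overline\Omega}\big|g^{ij}(Du_0)D_iD_ju_0\big|^2$, which is finite and depends only on $u_0$ because $u_0\in C^\infty(\overline\Omega)$.

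The main obstacle — and the reason the lemma cites the method of \cite[Lemma 2.2]{aw1} rather than just invoking the maximum principle bluntly — is the careful justification of the Hopf lemma at a parabolic corner/lateral boundary point for a Neumann-type problem: one needs that at an interior-in-time boundary maximum the normal derivative is strictly signed, and one must handle the possibility that the maximum is attained at $t_0=0$ (where it is on $\Omega_0$, giving the conclusion directly) versus $t_0>0$. A clean way to avoid delicate corner analysis is to argue directly on $w=u_t$ without squaring: $w$ solves a linear parabolic equation with the homogeneous Neumann condition $D_{\vec\nu}w=0$, so by the maximum principle with Neumann data both $\max w$ and $\min w$ over $\overline\Omega\times[0,T]$ are attained on $\Omega_0$; then $\sup|w|$ over the cylinder equals $\sup_{\Omega_0}|w|$, giving the result. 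I would also note the standard point that the coefficients $g^{ij}$ and $b^k$ are a priori smooth and bounded on any compact time slab $[0,T]$ because we are working on a given smooth solution $u$, so uniform parabolicity and the hypotheses of the maximum principle are automatic; the nonnegative Ricci curvature of $M^n$ is not needed for this particular estimate.
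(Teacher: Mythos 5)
Your argument is correct and follows essentially the same route as the paper: differentiate the PDE in time to obtain a linear parabolic equation for $u_t$, note that $|u_t|^2$ is a subsolution, invoke the weak parabolic maximum principle to push the maximum to the parabolic boundary, and then use the Hopf lemma together with $D_{\vec\nu}(u_t)=\partial_t\phi=0$ to rule out the lateral boundary. Your computation $D_{\vec\nu}|u_t|^2 = 2u_t\,D_{\vec\nu}(u_t)=0$ is in fact written more carefully than the corresponding line in the paper, and your remark that one can work directly with $w=u_t$ (rather than its square) and that the nonnegative Ricci hypothesis is not used here are both correct, minor simplifications of the same idea.
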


\begin{proof}
We first show that the maximum of $u_{t}$ must occur on
$\left(\partial\Omega\times[0,T]\right)\cup \Omega_{0}$. By a direct
computation, we have
\begin{eqnarray*}
\frac{\partial}{\partial t} |u_{t}|^{2}&=&2u_{t}\frac{\partial u_{t}}{\partial t}\\
&=&
\sum\limits_{i,j=1}^{n}2u_{t}\left(\frac{\partial g^{ij}}{\partial t}D_{i}D_{j}u+g^{ij}D_{i}D_{j}u_{t}\right)\\
&=&
\sum\limits_{i,j,k=1}^{n}2u_{t} \frac{\partial g^{ij}}{\partial D^{k}u}\frac{\partial D^{k}u}{\partial t}D_{i}D_{j}u+\sum\limits_{i,j=1}^{n}g^{ij}(D_{i}D_{j}|u_{t}|^{2}-2D_{i}u_{t}D_{j}u_{t})\\
&=&
\sum\limits_{i,j,k=1}^{n}2u_{t} \frac{\partial g^{ij}}{\partial D^{k}u}\frac{\partial D^{k}u}{\partial t}D_{i}D_{j}u+\sum\limits_{i,j=1}^{n}g^{ij}D_{i}D_{j}|u_{t}|^{2}-2\langle Du_{t},Du_{t}\rangle_{\sigma}\\
&=&
\sum\limits_{i,j,k,l=1}^{n}2u_{t}\frac{\partial g^{ij}}{\partial D^{k}u}\frac{\partial(\sigma^{kl}D_{l}u)}{\partial t}D_{i}D_{j}u+\sum\limits_{i,j=1}^{n}g^{ij}D_{i}D_{j}|u_{t}|^{2}-2\langle Du_{t},Du_{t}\rangle_{\sigma}\\
&=& \sum\limits_{i,j,k,l=1}^{n}\frac{\partial g^{ij}}{\partial
D^{k}u}\sigma^{kl}D_{i}D_{j}uD_{l}|u_{t}|^{2}+\sum\limits_{i,j=1}^{n}g^{ij}D_{i}D_{j}|u_{t}|^{2}-2\langle
Du_{t},Du_{t}\rangle_{\sigma},
\end{eqnarray*}
which implies
\begin{eqnarray*}
 \sup\limits_{\overline{\Omega}\times[0,T]}|u_{t}|^{2}=\sup\limits_{\left(\partial\Omega\times[0,T]\right)\cup
\Omega_{0}}|u_{t}|^{2}
\end{eqnarray*}
 by directly applying the weak maximum
principle.

Next, we expel the possibility that the maximum occurs at
$\partial\Omega\times[0,T]$. Assume that
 \begin{eqnarray*}
\max\limits_{\Omega\times\{t\}}|u_{t}|^{2}=|u_{t}|^{2}(\xi,\tau)>0
 \end{eqnarray*}
for some $(\xi,\tau)\in\partial\Omega\times[0,T]$. By the Hopf
Lemma, it follows that $\frac{\partial|u_{t}|^{2}}{\partial
{\vec{\nu}}}{\big{|}}_{(\xi,\tau)}<0$. But by the boundary condition
of the IBVP ($\sharp$), one has $\frac{\partial|u_{t}|^2}{\partial
{\vec{\nu}}}{\big{|}}_{(\xi,\tau)}=\frac{\partial}{\partial
t}\left(D_{{\vec{\nu}}}u{\big{|}}_{(\xi,\tau)}\right)=\frac{\partial}{\partial
t}(\phi(x))=0$. It is a contradiction. Therefore, the maximum cannot
be achieved at $\partial\Omega\times[0,T]$. The conclusion of Lemma
\ref{lemma2.1} follows.
\end{proof}

We know that if $\Omega$ is a strictly convex domain with smooth
boundary $\partial\Omega$, then there exists a smooth function
$\beta$ on $\Omega$ such that $\beta|_{\Omega}<0$,
$\beta|_{\partial\Omega}=0$, ${\sup_\Omega}|D\beta|\leq1$,
 \begin{eqnarray*}
 \left(\beta_{ij}\right)_{n\times n}\geq
k_{0}\left(\delta_{ij}\right)_{n\times n}
 \end{eqnarray*}
 for some positive constant $k_{0}>0$,
$\beta_{\vec{\nu}}=D_{\vec{\nu}}\beta=-1$ and $|D\beta|=1$ on
$\partial\Omega$. Besides, since $\Omega$ is strictly convex, we
have
 \begin{eqnarray*}
\left(h_{ij}^{\partial\Omega}\right)_{(n-1)\times(n-1)}\geq
\kappa_{1}\left(\delta_{ij}\right)_{(n-1)\times(n-1)},
 \end{eqnarray*}
where $h_{ij}^{\partial\Omega}$, $1\leq i,j\leq n-1$, is the second
fundamental form of the boundary $\partial\Omega$, and
$\kappa_{1}>0$ is the minimal principal curvature of
$\partial\Omega$.

\begin{lemma}\label{lemma2.2}
Assume that $u(x,t)\in C^{3,2}(\Omega\times[0,T))$ is a solution to
the IBVP $(\sharp)$, and the Ricci curvature of $M^{n}$ is
nonnegative. Then there exists a constant
$c_{1}:=c_{1}(n,\Omega,u_{0},\phi(x))$ such that
\begin{eqnarray*}
\sup\limits_{\Omega\times[0,T)}|Du|\leq c_{1}.
\end{eqnarray*}
 \end{lemma}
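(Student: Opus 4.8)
The plan is to establish a uniform bound on $|Du|$ by combining an interior maximum principle argument with a boundary estimate, in the spirit of Korevaar-type gradient estimates adapted to the manifold setting. First I would observe that $v=\sqrt{1+|Du|^2}$ satisfies a parabolic differential inequality: differentiating the evolution equation in $(\sharp)$ and contracting with $Du$, one computes the evolution of $v$ (equivalently of $|Du|^2$) and obtains a relation of the schematic form $\partial_t v = \sum_{i,j} g^{ij} D_i D_j v - (\text{good negative terms involving second derivatives}) - (\text{Ricci term})$. The key point is that the Ricci curvature of $M^n$ enters with a favorable sign: since $\mathrm{Ric}_M \geq 0$, the curvature contribution does not destroy the maximum principle. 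This is exactly where the nonnegative Ricci hypothesis is used, and it parallels Huisken's computation in \cite{gh3} and Zhou's adaptation in \cite{hyz}. Consequently, an interior maximum of $v$ cannot occur (or is controlled), so $\sup_{\Omega\times[0,T)} v$ is attained either on $\Omega_0$ (where it is bounded by $u_0$) or on the lateral boundary $\partial\Omega\times[0,T)$.

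The substantive part is therefore the boundary gradient estimate: controlling $|Du|$ on $\partial\Omega\times[0,T)$. Here I would use the auxiliary function $\beta$ introduced just before the lemma — the defining function of $\Omega$ with $\beta|_{\partial\Omega}=0$, $\beta<0$ inside, $(\beta_{ij})\geq k_0(\delta_{ij})$, and $\beta_{\vec\nu}=-1$ on $\partial\Omega$ — together with the strict convexity of $\partial\Omega$ encoded in $(h^{\partial\Omega}_{ij})\geq \kappa_1(\delta_{ij})$. The strategy is to build a barrier of the form $w = D_{\vec\nu}u - \phi - (\text{lower order})$, or more robustly to estimate the tangential and normal derivatives of $u$ along $\partial\Omega$ separately. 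The tangential derivatives are handled by differentiating the Neumann condition $D_{\vec\nu}u=\phi(x)$ along $\partial\Omega$, which bounds mixed tangential-normal second derivatives at the boundary in terms of $\phi$, $D^{\partial\Omega}\phi$, and the second fundamental form of $\partial\Omega$. For the full gradient, I would consider an auxiliary function such as $\Psi = \langle Du, Du\rangle_\sigma \, e^{A\beta}$ or $\Psi = \log v + A\beta$ for a large constant $A$ depending on $n$, $\Omega$, $\|\phi\|_{C^2}$, and the geometry of $M^n$ near $\Omega$; one checks that $\Psi$ satisfies a parabolic inequality with no positive zeroth-order term (using $\mathrm{Ric}_M\geq 0$ again and the convexity $\beta_{ij}\geq k_0\delta_{ij}$), so its maximum is on the parabolic boundary, while on $\partial\Omega$ the Neumann condition together with $\beta_{\vec\nu}=-1$ forces the normal derivative of $\Psi$ to have the wrong sign for a boundary maximum — contradicting the Hopf Lemma unless $v$ is already bounded there.

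The main obstacle I anticipate is the boundary estimate, specifically managing the curvature terms of $M^n$ and the geometry of $\partial\Omega$ simultaneously inside the barrier argument: on a general Riemannian manifold, commuting covariant derivatives in the computation of $\partial_t v$ and of $\partial_t(D_{\vec\nu}u)$ produces Riemann curvature terms (not just Ricci), and one must verify that after contracting with the positive-definite matrix $g^{ij}$ the only uncontrolled-sign contribution is the Ricci term $\mathrm{Ric}_M(Du,Du)\geq 0$, which helps rather than hurts. A secondary technical point is that $\phi$ is a function on $\overline\Omega$, so its derivatives must be carried through the boundary differentiation carefully; this contributes only bounded terms and can be absorbed into the constant $A$. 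Once the barrier is set up correctly, the conclusion $\sup_{\Omega\times[0,T)}|Du|\leq c_1(n,\Omega,u_0,\phi)$ follows by comparison on the parabolic boundary, with the bound independent of $T$ — which is what is needed for the long-time existence in part (1) of Theorem \ref{theorem1-1}.
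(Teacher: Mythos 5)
Your overall plan has the right shape---a log-gradient quantity penalised by the convex defining function $\beta$, with the convexity of $\partial\Omega$ and $\mathrm{Ric}_M\geq 0$ supplying the favourable signs---but the specific test function you propose, $\Psi=\log v+A\beta$ (or $\langle Du,Du\rangle_\sigma e^{A\beta}$), does not survive the boundary step when the Neumann data $\phi$ is nonzero, and this is precisely the crux of the lemma.

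Here is the gap concretely. Work at a boundary point in a frame with $\tau_n=\vec\nu$ (inward normal) and $\tau_1,\dots,\tau_{n-1}$ tangent to $\partial\Omega$. Then
\begin{equation*}
\Psi_{\vec\nu}=\frac{(|Du|^2)_{\vec\nu}}{2v^2}+A\beta_{\vec\nu}
=\frac{\sum_{k<n}u^k u_{kn}+\phi\,u_{nn}}{v^2}-A .
\end{equation*}
The mixed second derivatives $u_{kn}$ ($k<n$) can be expressed in terms of $D^{\partial\Omega}\phi$ and $h^{\partial\Omega}$ by differentiating the Neumann condition tangentially, exactly as you describe. But the normal--normal second derivative $u_{nn}$ is not controlled by the boundary condition, and because $\phi\neq 0$ the term $\phi\,u_{nn}/v^2$ survives with \emph{no definite sign and no a priori bound}. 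Consequently one cannot conclude that $\Psi_{\vec\nu}$ has the sign needed to invoke Hopf's lemma at a boundary maximum, no matter how $A$ is chosen. (In Huisken's zero-NBC setting this term vanishes, which is why the direct $\log v$ quantity works there; the step does not carry over to nonzero $\phi$.)

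The device that the paper (following Ma--Wang--Wei) uses to kill this term is the substitution
\begin{equation*}
\omega:=u+\phi(x)\,\beta,\qquad \Phi:=\log|D\omega|^2+\zeta\beta .
\end{equation*}
Since $\beta=0$ and $\beta_{\vec\nu}=-1$ on $\partial\Omega$, one has $D_{\vec\nu}\omega=u_{\vec\nu}+\phi\beta_{\vec\nu}+\phi_{\vec\nu}\beta=\phi-\phi=0$ on $\partial\Omega$; the nonzero Neumann condition for $u$ becomes a zero Neumann condition for $\omega$. Thus $\omega^n=0$ on $\partial\Omega$, and in the computation of $\Phi_{\vec\nu}$ the dangerous $\omega^n\omega_{nn}$ piece simply does not appear: only the tangential gradient of $\omega$, contracted against the second fundamental form of $\partial\Omega$, remains, giving $\Phi_{\vec\nu}\geq 2\kappa_1-\zeta$. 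Note also that the paper then wants $\zeta$ \emph{small} (namely $0<\zeta<2\kappa_1$, and later additionally $\zeta<\lambda(n-1)k_0/\Lambda$), not a large penalty constant $A$: the positivity $\Phi_{\vec\nu}>0$ at a putative boundary maximum already contradicts the Hopf lemma, so the lateral boundary is ruled out, and then a single unified interior maximum principle computation (rather than a two-stage ``interior then boundary'' split) closes the estimate, with $\mathrm{Ric}_M\geq 0$ entering through the commutator term $g^{ij}\omega^kR^l_{ikj}u_l$ in the Bochner-type expansion of $g^{ij}|D\omega|^2_{ij}-|D\omega|^2_t$. Without passing from $u$ to $\omega$, your boundary step cannot be completed.
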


\begin{proof}
To reach the conclusion of this lemma, we only need to prove that
for $0<T^{'}<T$, we can bound $|Du|$ on
$\overline{\Omega\times[0,T^{'})}$ independent of $T^{'}$ and then
take a limit argument.

Let
\begin{eqnarray*}
\Phi(x):=\log|D\omega|^{2}+f(\beta),
\end{eqnarray*}
where
\begin{eqnarray*}
\omega=u+\phi(x)\beta, \qquad f=\zeta \beta,
\end{eqnarray*}
and $\zeta$ is a positive constant which will be determined later.
For convenience, denote by $G=-\phi(x)\beta$.

We firstly show that the maximum of $\Phi(x)$ on
$\overline{\Omega}\times[0,T^{'}]$ cannot be achieved at the
boundary $\partial\Omega\times[0,T^{'}]$.

Choose a suitable local coordinates around a point
$x_{0}\in\overline{\Omega}$ such that $\tau_{n}$ is the inward unit
normal vector of $\partial\Omega$, and $\tau_{i}$,
$i=1,2,\cdots,n-1$, are the unit smooth tangent vectors of
$\partial\Omega$. Denote by $D_{\tau_{i}}u:=u_{i}$,
$D_{\tau_{j}}u:=u_{j}$, $D_{i}D_{j}u:=u_{ij}$ for $1\leq i,j \leq
n$.\footnote{ Covariant derivatives of other tensors can be
simplified similarly. For instance, one has
$\omega_{i}=D_{\tau_i}\omega$,
$\omega_{ij}=D_{\tau_i}D_{\tau_j}\omega$.} By the boundary
condition, one has
 \begin{eqnarray*}
D_{\tau_{n}}\omega\big{|}_{\partial\Omega}=\omega_{n}\big{|}_{\partial\Omega}=u_{n}\big{|}_{\partial\Omega}+\left(\phi_{n}\beta+\beta_{n}\phi\right)\big{|}_{\partial\Omega}=0.
 \end{eqnarray*}
  If $\Phi(x,t)$ attains its maximum at
$(x_{0},t_{0})\in\partial\Omega\times[0,T^{'}],$ then at
$(x_{0},t_{0}),$ we have
\begin{eqnarray} \label{add-1}
0\geq\Phi_{n}&=&\frac{|D\omega|^{2}_{n}}{|D\omega|^{2}}-\zeta\nonumber\\
&=&
\sum\limits_{k=1}^{n-1}\frac{2\omega^{k}D_{\tau_{n}}D_{\tau_{k}}\omega}{|D\omega|^{2}}-\zeta\nonumber\\
&=&
\sum\limits_{k=1}^{n-1}\frac{2\omega^{k}[\tau_{k}(\tau_{n}(\omega))-(D_{\tau_{k}}\tau_{n})\omega]}{|D\omega|^{2}}-\zeta\nonumber\\
&=&
-\sum\limits_{k=1}^{n-1}\frac{2\omega^{k}(D_{\tau_{k}}\tau_{n})(\omega)}{|D\omega|^{2}}-\zeta\nonumber\\
&=&
-\sum\limits_{k=1}^{n-1}\frac{2\omega^{k}\omega_{j}\langle D_{\tau_{k}}\tau_{n},\tau_{j}\rangle_{\sigma}}{|D\omega|^{2}}-\zeta\nonumber\\
&=&
\sum\limits_{k=1}^{n-1}\frac{2\omega^{k}\omega_{j}\langle D_{\tau_{k}}\tau_{j},\tau_{n}\rangle_{\sigma}}{|D\omega|^{2}}-\zeta\nonumber\\
&=&
\sum\limits_{k,j=1}^{n-1}\frac{2\omega^{k}\omega_{j}h_{kj}^{\partial\Omega}}{|D\omega|^{2}}-\zeta\nonumber\\
&\geq& 2\kappa_{1}-\zeta.
\end{eqnarray}
Hence, by taking $0<\zeta<2\kappa_{1}$, the maximum of $\Phi$ can
only be achieved in $\Omega\times[0,T^{'}]$. BTW, there is one thing
we would like to mention here, that is , in (\ref{add-1}), the
relation
 \begin{eqnarray*}
w^{k}=\sum\limits_{l=1}^{n}\sigma^{kl}w_{l}=\sum\limits_{l=1}^{n-1}\sigma^{kl}w_{l}
\end{eqnarray*}
holds. Here we have used the convention in Riemannian Geometry to
deal with the subscripts and superscripts, and this convention will
also be  used in the sequel.

Assume that  $\Phi(x,t)$ attains its maximum at
$(x_{0},t_{0})\in\Omega\times[0,T^{'}]$. By direct calculation, we
have
\begin{eqnarray*}
\Phi_{t}(x_{0},t_{0})=\frac{|D\omega|^{2}_{t}}{|D\omega|^{2}},
\end{eqnarray*}
\begin{eqnarray} \label{2.7-1}
\Phi_{i}(x_{0},t_{0})=\frac{|D\omega|^{2}_{i}}{|D\omega|^{2}}+\zeta
\beta_{i}=0
\end{eqnarray}
and
\begin{eqnarray} \label{2.8-1}
\Phi_{ij}(x_{0},t_{0})&=&
\frac{|D\omega|^{2}_{ij}}{|D\omega|^{2}}-\frac{|D\omega|^{2}_{i}|D\omega|^{2}_{j}}{|D\omega|^{4}}+\zeta\beta_{ij}\nonumber\\
&=& \frac{|D\omega|^{2}_{ij}}{|D\omega|^{2}}+\zeta
\beta_{ij}-\zeta^{2}\beta_{i}\beta_{j}.
\end{eqnarray}
Since $g^{ij}=\sigma^{ij}-\frac{D^{i}uD^{j}u}{1+|Du|^{2}}$, we have
\begin{eqnarray} \label{2.9-1}
0&\geq&\sum\limits_{i,j=1}^{n}g^{ij}\Phi_{ij}-\Phi_{t}\nonumber\\
&=&\sum\limits_{i,j=1}^{n}g^{ij}\frac{|D\omega|^{2}_{ij}}{|D\omega|^{2}}-\frac{|D\omega|^{2}_{t}}{|D\omega|^{2}}
+\zeta\sum\limits_{i,j=1}^{n}g^{ij}\beta_{ij}-\zeta^{2}\sum\limits_{i,j=1}^{n}g^{ij}\beta_{i}\beta_{j}\nonumber\\
&\triangleq& I_{1}+I_{2},
\end{eqnarray}
where
\begin{eqnarray*}
I_{1}=\sum\limits_{i,j=1}^{n}g^{ij}\frac{|D\omega|^{2}_{ij}}{|D\omega|^{2}}-\frac{|D\omega|^{2}_{t}}{|D\omega|^{2}}
\end{eqnarray*}
and
\begin{eqnarray*}
I_{2}=\sum\limits_{i,j=1}^{n}(\zeta g^{ij}\beta_{ij}-
\zeta^{2}g^{ij}\beta_{i}\beta_{j}).
\end{eqnarray*}

At $(x_{0},t_{0})$, one can make a suitable change\footnote{ This
change can always be found. In fact, one can firstly rotate
$\tau_i$, $i=1,2,\cdots,n$, such that the gradient vector $Du$ lies
in the same or the opposite direction with $\tau_1$. Denote by the
hyperplane, which is orthogonal with $\tau_1$, by $\Pi$. Then rotate
$\tau_{2},\tau_{3},\cdots,\tau_{n}$ in $\Pi$, corresponding to an
orthogonal matrix, such that the real symmetric matrices
$(u_{ij})_{2\leq i,j\leq n}$, $(\sigma_{ij})_{2\leq i,j\leq n}$
change into diagonal matrices.} to the coordinate vector fields
$\{\tau_{1},\tau_{2},\cdots,\tau_{n}\}$ such that $|Du|=u_{1}$,
$(u_{ij})_{2\leq i,j\leq n}$ is diagonal, and $(\sigma_{ij})_{2\leq
i,j\leq n}$ is diagonal. Clearly, in this setting, $\sigma^{11}=1$.
Besides, we  Then
 \begin{eqnarray*}
g^{11}&=&\frac{1}{v^{2}}, \quad g^{ij}=0~\mathrm{for} ~2\leq i,j\leq
n,i\neq
j,~\mathrm{and}~g^{ii}=\sigma^{ii}~\mathrm{for} ~i\geq2,\\
(g^{ij})_{k}&=&\left(\sigma^{ij}-\frac{D^{i}uD^{j}u}{v^{2}}\right)_{k}\\
&=&-\left(\frac{2u_{k}^{i}u^{j}}{v^{2}}-\sum\limits_{m=1}^{n}\frac{2u^{m}u_{mk}u^{i}u^{j}}{v^{4}}\right),
 \end{eqnarray*}
where $v=\sqrt{1+|Du|^{2}}=\sqrt{1+u^{2}_{1}}$.

Assume that $u_{1}$ is big enough such that
$u_{1},\omega_{1},\omega^{1},|D\omega|$ and $v$ are equivalent with
each other at $(x_{0},t_{0})$. Otherwise, the conclusion of Lemma
\ref{lemma2.2} is proved. It's also noticeable that
$|\omega_{i}|\leq c_2$, $i=2,\cdots,n$, for some nonnegative
constant $c_{2}$. \emph{Here, in the proof, $c_2$ is denoted to be a
nonnegative constant which may changes in different places but has
nothing to do with $T^{'}$}. Since $\left(\beta_{ij}\right)_{n\times
n}\geq k_{0}\left(\delta_{ij}\right)_{n\times n}$, one can easily
get
\begin{eqnarray} \label{2.10-1}
I_{2}&=&\sum\limits_{i,j=1}^{n}(\zeta g^{ij}\beta_{ij}-\zeta^{2}g^{ij}\beta_{i}\beta_{j})\nonumber\\
&\geq&
\zeta\left[\sum\limits_{i=2}^{n}\sigma^{ii}k_{0}+\frac{k_{0}}{v^{2}}\right]-\zeta^{2}\left(\frac{\beta_{1}^{2}}{v^{2}}+\sum\limits_{i=2}^{n}\sigma^{ii}\beta_{i}^{2}\right).
\end{eqnarray}

Set
$J:=\sum\limits_{i,j=1}^{n}g^{ij}|D\omega|^{2}_{ij}-|D\omega|^{2}_{t}$,
one has
\begin{eqnarray*}
J&=&\sum\limits_{i,j=1}^{n}g^{ij}|D\omega|^{2}_{ij}-|D\omega|^{2}_{t}\\
&=&\sum\limits_{i,j,k=1}^{n}g^{ij}(2\omega^{k}_{j}\omega_{ki}+2\omega^{k}\omega_{ikj})-2\sum\limits_{k=1}^{n}\omega^{k}\omega_{tk}\\
&=&
\sum\limits_{i,j,k,l=1}^{n}g^{ij}[2\omega^{k}_{j}\omega_{ki}+2\omega^{k}(u_{ijk}+R^{l}_{ikj}u_{l}-G_{ikj})]-2\sum\limits_{k=1}^{n}\omega^{k}\omega_{tk}\\
&=&
\sum\limits_{i,j,k=1}^{n}g^{ij}[2\omega^{k}_{j}\omega_{ki}+2\omega^{k}(u_{ijk}+R^{1}_{ikj}u_{1}-G_{ikj})]-2\sum\limits_{k=1}^{n}\omega^{k}\omega_{tk}\\
&=&
2\sum\limits_{k=1}^{n}\omega^{k}\left[\sum\limits_{i,j=1}^{n}g^{ij}(u_{ijk}+R^{1}_{ikj}u_{1}-G_{ikj})-\omega_{tk}\right]
+2\sum\limits_{i,j,k=1}^{n}g^{ij}\omega_{ki}\omega^{k}_{j}\\
&=&
2\sum\limits_{k=1}^{n}\omega^{k}\left[\sum\limits_{i,j=1}^{n}g^{ij}(u_{ijk}+R^{1}_{ikj}u_{1}-G_{ijk})-u_{tk}\right]
+2\sum\limits_{i,j,k=1}^{n}g^{ij}\omega_{ki}\omega^{k}_{j}\\
&=&
-2\sum\limits_{i,j,k=1}^{n}g^{ij}\omega^{k}G_{ijk}+2\sum\limits_{i,j,k=1}^{n}g^{ij}\omega^{k}R^{1}_{ikj}u_{1}
-2\sum\limits_{i,j,k=1}^{n}\omega^{k}(g^{ij})_{k}u_{ij}+2\sum\limits_{i,j,k=1}^{n}g^{ij}\omega_{ki}\omega^{k}_{j}\\
&\triangleq& J_{1}+J_{2}+J_{3}+J_{4},
\end{eqnarray*}
where $R^{l}_{ikj}$, $1\leq i,j,k,l\leq n$, are coefficients of the
curvature tensor on $M^n$. It is obvious that
\begin{eqnarray} \label{2.11-1}
J_{1}&=&-2\sum\limits_{i,j,k=1}^{n}g^{ij}\omega^{k}G_{ijk}\nonumber\\
&\geq&-c_{2}v.
\end{eqnarray}
Next we deal with $J_{2},J_{3}$. In fact, using the nonnegativity of
the Ricci curvature on $M^n$, we have
\begin{eqnarray}\label{2.12-1}
J_{2}&=&2\sum\limits_{i,j,k=1}^{n}g^{ij}\omega^{k}R^{1}_{ikj}u_{1}\nonumber\\
&=&2\sum\limits_{i=2}^{n}\sigma^{ii}R^{1}_{i1i}v^{2}+2\sum\limits_{i,k=2}^{n}\sigma^{ii}\omega^{k}R^{1}_{iki}v+2R^{1}_{111}
+2\sum\limits_{k=2}^{n}\frac{\omega^{k}R^{1}_{1k1}}{v}\nonumber\\
&\geq& c_{2}v^{2}+O(1)v+2R^{1}_{111}\nonumber\\
&\geq& c_{2}v^{2}+O(1)v,
\end{eqnarray}
 and
\begin{eqnarray*}
J_{3}&=&-2\sum\limits_{i,j,k=1}^{n}\omega^{k}(g^{ij})_{k}u_{ij}\\
&=&\sum\limits_{i,j,k=1}^{n}4\omega^{k}\frac{u_{k}^{i}u^{j}u_{ij}}{v^{2}}-\sum\limits_{m,i,j,k=1}^{n}4\omega^{k}\frac{u^{m}u^{i}u^{j}u_{mk}u_{ij}}{v^{4}}\\
&=&\sum\limits_{i,l,k=1}^{n}4\omega^{k}\frac{\sigma^{il}u_{1}u_{lk}u_{1i}}{v^{2}}
-\sum\limits_{k=1}^{n}4\omega^{k}\frac{u^{3}_{1}u_{1k}u_{11}}{v^{4}}\\
&=&4\frac{\omega^{1}(u_{11})^{2}u_{1}}{v^{4}}+4\sum\limits_{i=2}^{n}\frac{\omega^{i}u_{1}u_{11}u_{1i}}{v^{4}}
+4\sum\limits_{i=2}^{n}\frac{\omega^{1}u_{1}\sigma^{ii}(u_{1i})^{2}}{v^{2}}+4\sum\limits_{i=2}^{n}\frac{\omega^{i}\sigma^{ii}u_{1i}u_{ii}u_{1}}{v^{2}}\\
&\triangleq& J_{31}+J_{32}+J_{33}+J_{34}.
\end{eqnarray*}
By (\ref{2.7-1}) and (\ref{2.8-1}), for $2\leq i\leq n$, we have
\begin{eqnarray} \label{2.13-1}
u_{1i}-G_{1i}=\frac{-\zeta\beta_{i}|D\omega|^{2}-2\omega^{i}u_{ii}+2\sum\limits_{k=2}^{n}\omega^{k}G_{ik}}{2\omega^{1}}
\end{eqnarray}
and
\begin{eqnarray} \label{2.13-2}
\sum\limits_{i=2}^{n}\frac{2\omega^{i}(u_{1i}-G_{1i})}{|D\omega|^{2}}=-\zeta\beta_{1}-\frac{2\omega^{1}(u_{11}-G_{11})}{|D\omega|^{2}}.
\end{eqnarray}
By (\ref{2.13-1}), for $2\leq i\leq n$, it follows that
\begin{eqnarray} \label{2.14-1}
u_{1i}&=&\frac{-\zeta\beta_{i}|D\omega|^{2}-2\omega^{i}u_{ii}+2\sum\limits_{k=2}^{n}\omega^{k}G_{ik}}{2\omega^{1}}+G_{1i}\nonumber\\
&=&
-\frac{1}{2}\zeta\beta_{i}v-\frac{\omega^{i}u_{ii}}{\omega^{1}}+O(1).
\end{eqnarray}
By (\ref{2.13-2}), we have
\begin{eqnarray} \label{2.15-1}
\sum\limits_{i=2}^{n}\frac{2\omega^{i}(u_{1i}-G_{1i})}{|D\omega|^{2}}
&=&\sum\limits_{i=2}^{n}\frac{2\omega^{i}}{|D\omega|^{2}}\left(\frac{-\zeta\beta_{i}|D\omega|^{2}-2\omega^{i}u_{ii}
+2\sum\limits_{k=2}^{n}\omega^{k}G_{ik}}{2\omega^{1}}\right)\nonumber\\
&=& \sum\limits_{i=2}^{n}O(\frac{|\zeta\beta_{i}|}{v})
-\sum\limits_{i=2}^{n}\frac{2(\omega^{i})^{2}u_{ii}}{|D\omega|^{2}\omega^{1}}+\sum\limits_{i,k=2}^{n}\frac{2\omega^{i}\omega^{k}G_{ki}}{|D\omega|^{2}\omega^{1}}.
\end{eqnarray}
By (\ref{2.14-1}) and (\ref{2.15-1}) , we have
\begin{eqnarray*}
-\zeta\beta_{1}-\frac{2\omega^{1}(u_{11}-G_{11})}{|D\omega|^{2}}&=&
\sum\limits_{i=2}^{n}O(\frac{|\zeta\beta_{i}|}{v})-\sum\limits_{i=2}^{n}\frac{2(\omega^{i})^{2}u_{ii}}{|D\omega|^{2}\omega^{1}}
+\sum\limits_{i,k=2}^{n}\frac{2\omega^{i}\omega^{k}G_{ki}}{|D\omega|^{2}\omega^{1}}\\
-\zeta\beta_{1}-\frac{2v(u_{11}-G_{11})}{v^{2}}&=&
O(\frac{1}{v})+O(\frac{1}{v^{3}})u_{ii}+O(\frac{1}{v^{3}}).
\end{eqnarray*}
So,
\begin{eqnarray} \label{2.16-1}
u_{11}=-\frac{1}{2}\zeta\beta_{1}v+\sum\limits_{i=2}^{n}O(\frac{1}{v^{2}})u_{ii}+O(1)
\end{eqnarray}

Now, we deal with $J_{31}, J_{32}, J_{33}, J_{34}$ respectively. It
is obvious that
\begin{eqnarray} \label{2.17-1}
J_{31}+J_{33}\geq0.
\end{eqnarray}
For the term $J_{32}$,
\begin{eqnarray} \label{2.18-1}
J_{32}&=&4\sum\limits_{i=2}^{n}\frac{\omega^{i}u_{1}u_{11}u_{1i}}{v^{4}}\nonumber\\
&=&4\sum\limits_{i=2}^{n}\frac{\omega^{i}u_{1}}{v^{4}}\left(-\frac{1}{2}\zeta\beta_{1}v+\sum\limits_{i=2}^{n}O(\frac{1}{v^{2}})u_{ii}+O(1)\right)\cdot
\left(-\frac{1}{2}\zeta\beta_{i}v-\frac{\omega^{i}u_{ii}}{\omega^{1}}+O(1)\right)\nonumber\\
&=&O(\frac{\zeta^{2}|\beta_{i}\beta_{1}|}{v})+O(\frac{1}{v^{2}})
+\sum\limits_{i=2}^{n}\left[O(\frac{|\zeta\beta_{1}|}{v^{2}})+O(\frac{|\zeta\beta_{i}|}{v^{4}})\right]u_{ii}+\sum\limits_{i=2}^{n}O(\frac{1}{v^{6}})u_{ii}^{2}.
\end{eqnarray}
Besides, we have
\begin{eqnarray} \label{2.19-1}
J_{34}&=&4\sum\limits_{i=2}^{n}\frac{\omega^{i}\sigma^{ii}u_{1i}u_{ii}u_{1}}{v^{2}}\nonumber\\
&=&
4\sum\limits_{i=2}^{n}\frac{\omega^{i}u_{1}\sigma^{ii}u_{ii}}{v^{2}}\left(-\frac{1}{2}\zeta\beta_{i}v-\frac{\omega^{i}u_{ii}}{\omega^{1}}+O(1)\right)\nonumber\\
&=&
\sum\limits_{i=2}^{n}O(\frac{1}{v^{2}})u_{ii}^{2}+\sum\limits_{i=2}^{n}O(|\zeta\beta_{i}|)u_{ii}.
\end{eqnarray}
By (\ref{2.17-1})-(\ref{2.19-1}), it follows that
\begin{eqnarray} \label{2.20-1}
J_{3}&\geq&\sum\limits_{i=2}^{n}\left[O(\frac{1}{v^{6}})+O(\frac{1}{v^{2}})\right]u_{ii}^{2}
+\sum\limits_{i=2}^{n}\left[O(\frac{|\zeta\beta_{1}|}{v^{2}})+O(\frac{|\zeta\beta_{i}|}{v^{4}})+O(|\zeta\beta_{i}|)\right]u_{ii}\nonumber\\
&&+O(\frac{\zeta^{2}|\beta_{i}\beta_{1}|}{v})+O(\frac{1}{v^{2}}).
\end{eqnarray}
Then, for $J_{4}$, we have
\begin{eqnarray} \label{2.23-1}
J_{4}&=&2\sum\limits_{i,j,k=1}^{n}g^{ij}\omega_{ki}\omega^{k}_{j}\nonumber\\
&=&
2\sum\limits_{l,i,j,k=1}^{n}g^{ij}\sigma^{kl}\omega_{ki}\omega_{lj}\nonumber\\
&=&
2\sum\limits_{i=1}^{n}\frac{\sigma^{ii}}{v^{2}}(\omega_{1i})^{2}+2\sum\limits_{i=2}^{n}\sigma^{ii}(\omega_{1i})^{2}
+2\sum\limits_{i=2}^{n}(\sigma^{ii})^{2}(\omega_{ii})^{2}\nonumber\\
&\geq&\sum\limits_{i=2}^{n}(1+\frac{1}{v^{2}})\sigma^{ii}u_{1i}^{2}+\sum\limits_{i=2}^{n}(\sigma^{ii})^{2}u_{ii}^{2}-c_{2}.
\end{eqnarray}
By (\ref{2.11-1}), (\ref{2.12-1}), (\ref{2.20-1}) and
(\ref{2.23-1}), we write all the terms containing $u_{ii}$ in $J$ as
below
\begin{eqnarray*}
&&\sum\limits_{i=2}^{n}\left[O(\frac{1}{v^{6}})+O(\frac{1}{v^{2}})+(\sigma^{ii})^{2}\right]u_{ii}^{2}+\sum\limits_{i=2}^{n}\left[O(|\zeta\beta_{i}|)
+O(\frac{|\zeta\beta_{i}|}{v^{4}})+O(\frac{|\zeta\beta_{1}|}{v^{2}})\right]u_{ii}\nonumber\\
&&\qquad
\geq-\sum\limits_{i=2}^{n}\frac{O(|\zeta\beta_{i}|^{2})}{(\sigma^{ii})^{2}},
\end{eqnarray*}
where the inequality holds since $ax^{2}+bx\geq-\frac{b^{2}}{4a}$
for $a>0$. Therefore, we can obtain
\begin{eqnarray} \label{2.23-1}
J&=&J_{1}+J_{2}+J_{3}+J_{4}\nonumber\\
&\geq&-\sum\limits_{i=2}^{n}\frac{O(|\zeta\beta_{i}|^{2})}{(\sigma^{ii})^{2}}
-c_{2}v+O(1)v+c_{2}v^{2}.
\end{eqnarray}
 Hence,
\begin{eqnarray} \label{2.24-1}
I_{1}&=&\frac{J}{|D\omega|^{2}}\nonumber\\
&\geq&
-\frac{\sum\limits_{i=2}^{n}\frac{O(|\zeta\beta_{i}|^{2})}{(\sigma^{ii})^{2}}+c_{2}v-O(1)v-c_{2}v^{2}}{|D\omega|^{2}}.
\end{eqnarray}
By (\ref{2.9-1}), (\ref{2.10-1}) and (\ref{2.24-1}), at the maximum
point $(x_{0},t_{0})$, we can get
\begin{eqnarray*}
0&\geq&\sum\limits_{i,j=1}^{n}g^{ij}\Phi_{ij}-\Phi_{t}\\
&\geq&-\frac{\sum\limits_{i=2}^{n}\frac{O(|\zeta\beta_{i}|^{2})}{(\sigma^{ii})^{2}}+c_{2}v-O(1)v-c_{2}v^{2}}{|D\omega|^{2}}\\
&&+\zeta\left[\sum\limits_{i=2}^{n}\sigma^{ii}k_{0}+\frac{k_{0}}{v^{2}}\right]-\zeta^{2}\left(\frac{\beta_{1}^{2}}{v^{2}}+\sum\limits_{i=2}^{n}\sigma^{ii}\beta_{i}^{2}\right)\\
&\geq&\zeta\left[\sum\limits_{i=2}^{n}\sigma^{ii}k_{0}+\frac{k_{0}}{v^{2}}\right]-\zeta^{2}\left(\frac{\beta_{1}^{2}}{v^{2}}+\sum\limits_{i=2}^{n}\sigma^{ii}\beta_{i}^{2}\right)
\end{eqnarray*}
Let $\lambda=\min(\sigma^{ii})$, $\Lambda=\max(\sigma^{ii})$,
$i\geq2$. Taking
$0<\zeta<\min\{\frac{\lambda(n-1)k_{0}}{{\Lambda}},2\kappa_{1}\}$,
we can obtain
\begin{eqnarray*}
v(x_{0},t_{0})\leq c_{3},
\end{eqnarray*}
where $c_{3}$ is independent of $T^{'}$. Then the conclusion of
Lemma \ref{lemma2.2} follows immediately.
\end{proof}

By Lemmas \ref{lemma2.1} and \ref{lemma2.2}, together with the
Schauder estimate for parabolic PDEs, we can get uniform estimates
in any $C^{k}$-norm for the derivatives of $u$, and locally (in
time) uniform bounds for the $C^{0}$-norm, which leads to the
long-time existence, with uniform bounds on all higher derivatives
of $u$, to the IBVP ($\sharp$). This finishes the proof of (1) of
Theorem \ref{theorem1-1}.

\section{Asymptotic behavior}
\renewcommand{\thesection}{\arabic{section}}
\renewcommand{\theequation}{\thesection.\arabic{equation}}
\setcounter{equation}{0} \setcounter{maintheorem}{0}

In order to study the asymptotic behavior of the solution to the
IVBP $(\sharp)$, we need the following two conclusions.

\begin{lemma}\label{lemma3.1}
Let $\Omega$ be a strictly convex bounded domain in $M^{n}$,
$n\geq2$, and $\partial\Omega\in C^{3}$. Assume that
$\varepsilon>0$, the Ricci curvature of $M^{n}$ is nonnegative,
$\phi$ is a function defined on $\overline{\Omega}$, and there
exists a positive constant $L>0$ such that
\begin{eqnarray*}
|\phi|_{C^{3}(\overline{\Omega})}\leq L
\end{eqnarray*}
Let $u\in C^{2}(\overline{\Omega})\cap C^{3}(\Omega)$ be a solution
to the following BVP
\begin{eqnarray} \label{3.2-add}
\left\{
\begin{array}{lll}
{\varepsilon u=\sum\limits_{i=1}^{n}\left(\sigma^{ij}-\frac{D^{i}uD^{j}u}{1+|Du|^{2}}\right)D_{i}D_{j}u} \qquad \qquad &\mathrm{in} ~ \Omega,\\
D_{\vec{\nu}}u=\phi(x) \qquad \qquad &\mathrm{on} ~
\partial\Omega,\\
\end{array}
\right.
\end{eqnarray}
then there exists a constant $c_{4}:=c_{4}(n,\Omega,L)>0$ such that
\begin{eqnarray*}
\sup\limits_{\overline{\Omega}}|Du|\leq c_{4}.
\end{eqnarray*}
 \end{lemma}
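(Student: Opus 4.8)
The plan is to adapt, essentially verbatim, the gradient estimate of Lemma \ref{lemma2.2} to the elliptic regularized problem (\ref{3.2-add}); the only genuinely new ingredient is the zeroth-order term $\varepsilon u$, which will be seen to enter with a favourable sign. As in the proof of Lemma \ref{lemma2.2}, set $\omega=u+\phi(x)\beta$ and introduce the auxiliary function
\begin{eqnarray*}
\Phi=\log|D\omega|^{2}+\zeta\beta \qquad \mathrm{on} \ \overline{\Omega},
\end{eqnarray*}
where $\beta$ is the defining function of $\Omega$ (so $\beta|_{\partial\Omega}=0$, $\beta_{\vec\nu}=-1$, $(\beta_{ij})\geq k_{0}(\delta_{ij})$, $|D\beta|\le1$) and $\zeta>0$ is a small constant to be fixed. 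From $\beta|_{\partial\Omega}=0$, $\beta_{\vec\nu}=-1$ and $D_{\vec\nu}u=\phi$ on $\partial\Omega$ one gets $D_{\vec\nu}\omega=0$ on $\partial\Omega$, and then, arguing exactly as in (\ref{add-1}), at a boundary maximum point of $\Phi$ the strict convexity of $\partial\Omega$ forces $0\geq\Phi_{\vec\nu}\geq 2\kappa_{1}-\zeta$; hence choosing $0<\zeta<2\kappa_{1}$ rules out a boundary maximum, and it suffices to bound $|Du|$ at an interior maximum point $x_{0}$ of $\Phi$. We may of course assume $|Du|(x_{0})$ is as large as we wish, otherwise there is nothing to prove.

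At $x_{0}$ we rotate the frame so that $|Du|=u_{1}$ and both $(u_{ij})_{2\le i,j\le n}$ and $(\sigma_{ij})_{2\le i,j\le n}$ are diagonal, so that $\sigma^{11}=1$, $g^{11}=v^{-2}$, $g^{ii}=\sigma^{ii}$ for $i\geq2$, with $v=\sqrt{1+u_{1}^{2}}$. Using $\Phi_{i}(x_{0})=0$ (hence $|D\omega|^{2}_{i}=-\zeta\beta_{i}|D\omega|^{2}$) and $\sum_{i,j}g^{ij}\Phi_{ij}(x_{0})\le0$, we obtain, as in (\ref{2.9-1})--(\ref{2.10-1}), $0\ge I_{1}+I_{2}$ with $I_{2}=\zeta\sum g^{ij}\beta_{ij}-\zeta^{2}\sum g^{ij}\beta_{i}\beta_{j}\ge\zeta\big(k_{0}\sum_{i\ge2}\sigma^{ii}+k_{0}v^{-2}\big)-\zeta^{2}\big(v^{-2}\beta_{1}^{2}+\sum_{i\ge2}\sigma^{ii}\beta_{i}^{2}\big)$ and $I_{1}=|D\omega|^{-2}\sum_{i,j}g^{ij}|D\omega|^{2}_{ij}$. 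To expand $I_{1}$ we differentiate the interior equation of (\ref{3.2-add}): writing $\omega_{ij}=u_{ij}-G_{ij}$ with $G=-\phi\beta$, commuting covariant derivatives (which produces the curvature term $R^{1}_{ikj}u_{1}$), and using
\begin{eqnarray*}
\sum_{i,j=1}^{n}g^{ij}u_{ijk}=D_{k}(\varepsilon u)-\sum_{i,j=1}^{n}(g^{ij})_{k}u_{ij}=\varepsilon u_{k}-\sum_{i,j=1}^{n}(g^{ij})_{k}u_{ij},
\end{eqnarray*}
we get $\sum_{i,j}g^{ij}|D\omega|^{2}_{ij}=J_{1}+J_{2}+J_{3}+J_{4}+J_{5}$, where $J_{1},\dots,J_{4}$ are precisely the four terms appearing in the proof of Lemma \ref{lemma2.2} and the new term is $J_{5}=2\sum_{k}\omega^{k}\varepsilon u_{k}=2\varepsilon\langle Du,D\omega\rangle_{\sigma}=2\varepsilon\big(|Du|^{2}+\langle Du,D(\phi\beta)\rangle_{\sigma}\big)\ge2\varepsilon(v^{2}-1-c_{2}v)\ge0$ once $v$ is large enough in terms of $n,\Omega,L$ — uniformly in $\varepsilon>0$. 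Thus $J_{5}$ may simply be discarded, and the presence of $\varepsilon u$ causes no loss.

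It then remains to estimate $J_{1},\dots,J_{4}$ exactly as in (\ref{2.11-1})--(\ref{2.23-1}): $J_{1}\ge-c_{2}v$ because $|\phi|_{C^{3}(\overline{\Omega})}\le L$; the nonnegativity of the Ricci curvature of $M^{n}$ gives $J_{2}=2\sum_{i\ge2}\sigma^{ii}R^{1}_{i1i}v^{2}+O(v)\ge c_{2}v^{2}+O(v)$; the critical-point relations $\Phi_{i}(x_{0})=0$ let one solve for $u_{1i}$ ($i\ge2$) and $u_{11}$ in terms of the $u_{ii}$ and lower-order quantities (the analogues of (\ref{2.14-1}) and (\ref{2.16-1})), and substituting these into $J_{3}$ and into the nonnegative quadratic $J_{4}=2\sum g^{ij}\omega_{ki}\omega^{k}_{j}$ and using $ax^{2}+bx\ge-b^{2}/(4a)$ to absorb every term linear in $u_{ii}$ against the $(\sigma^{ii})^{2}u_{ii}^{2}$ coming from $J_{4}$, one arrives at $J\ge-\sum_{i\ge2}O(\zeta^{2}\beta_{i}^{2})(\sigma^{ii})^{-2}-c_{2}v+O(v)+c_{2}v^{2}$, whence $I_{1}\ge-C(1+v)/v^{2}$ with $C=C(n,\Omega,L,\zeta)$. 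Feeding this and the lower bound for $I_{2}$ into $0\ge I_{1}+I_{2}$ and choosing $0<\zeta<\min\{\lambda(n-1)k_{0}/\Lambda,\,2\kappa_{1}\}$ with $\lambda=\min_{i\ge2}\sigma^{ii}$, $\Lambda=\max_{i\ge2}\sigma^{ii}$, the strictly positive contribution of $I_{2}$ dominates unless $v(x_{0})\le c_{3}=c_{3}(n,\Omega,L)$, which gives the asserted bound $\sup_{\overline{\Omega}}|Du|\le c_{4}$. The main technical burden, just as in Lemma \ref{lemma2.2}, lies in this last paragraph — tracking the powers of $v$ in $J_{3}$ and $J_{4}$ and checking that all the $u_{ii}$-terms can be absorbed; the zeroth-order term $\varepsilon u$ adds nothing essential since $J_{5}$ has the right sign.
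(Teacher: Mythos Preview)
Your proposal is correct and follows essentially the same route as the paper's own proof: the paper also uses the auxiliary function $\Phi=\log|D\omega|^{2}+\zeta\beta$, rules out a boundary maximum via the convexity inequality, decomposes $\sum g^{ij}|D\omega|^{2}_{ij}$ into the same pieces, observes that the extra term $2\varepsilon\sum_{k}\omega^{k}u_{k}$ is nonnegative once $|Du|$ is large, and then refers back to the estimates of Lemma~\ref{lemma2.2} for the remaining terms. The only cosmetic difference is that the paper labels the new $\varepsilon$-term $J_{4}$ and the quadratic gradient term $J_{5}$ (the reverse of your convention), and it writes the $\varepsilon$-term simply as $2\varepsilon v^{2}$ rather than your more careful $2\varepsilon(|Du|^{2}+\langle Du,D(\phi\beta)\rangle_{\sigma})$.
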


\begin{proof}
Let $\Phi(x)=\log|D\omega|^{2}+\zeta\beta$, where
$\omega=u+\phi(x)\beta$, and $\zeta$ will be determined later.
Denote by $G=-\phi(x)\beta$.

If one chooses $0<\zeta<2\kappa_{1}$, using an almost same procedure
as that in (\ref{add-1}), it is easy to show that the maximum of
$\Phi$ can only be achieved in the interior of $\Omega$.

Assume that $\Phi(x)$ attains its maximum at $x_{0}\in\Omega$, then
we have at this point that
\begin{eqnarray*}
\Phi_{i}(x_{0})=\frac{|D\omega|^{2}_{i}}{|D\omega|^{2}}+\zeta\beta_{i}=0
\end{eqnarray*}
and
\begin{eqnarray*}
0\geq\Phi_{ij}(x_{0}) &=&
\frac{|D\omega|^{2}_{ij}}{|D\omega|^{2}}-\frac{|D\omega|^{2}_{i}|D\omega|^{2}_{j}}{|D\omega|^{4}}+\zeta\beta_{ij}\nonumber\\
&=& \frac{|D\omega|^{2}_{ij}}{|D\omega|^{2}}+\zeta
\beta_{ij}-\zeta^{2}\beta_{i}\beta_{j}.
\end{eqnarray*}
It follows that
\begin{eqnarray} \label{3.5}
0&\geq&\sum\limits_{i,j=1}^{n}g^{ij}\Phi_{ij}\nonumber\\
&=&
\sum\limits_{i,j=1}^{n}g^{ij}\frac{|D\omega|^{2}_{ij}}{|D\omega|^{2}}+\sum\limits_{i,j=1}^{n}\zeta g^{ij}\beta_{ij}-\sum\limits_{i,j=1}^{n}\zeta^{2}g^{ij}\beta_{i}\beta_{j}\nonumber\\
&\triangleq& I_{1}+I_{2}
\end{eqnarray}
where
\begin{eqnarray*}
I_{1}=\sum\limits_{i,j=1}^{n}g^{ij}\frac{|D\omega|^{2}_{ij}}{|D\omega|^{2}}
\end{eqnarray*}
and
\begin{eqnarray*}
I_{2}=\sum\limits_{i,j=1}^{n}\zeta
g^{ij}\beta_{ij}-\sum\limits_{i,j=1}^{n}\zeta^{2}g^{ij}\beta_{i}\beta_{j}.
\end{eqnarray*}
As in Lemma \ref{lemma2.2}, one can choose suitable local
coordinates around $x_0$ such that $|Du|=u_{1}$, $(u_{ij})_{2\leq
i,j\leq n}$ is diagonal, and $(\sigma_{ij})_{2\leq i,j\leq n}$ is
diagonal. Similarly, for the term $I_{2}$, at $x_0$, we have
\begin{eqnarray*}
I_{2}&\geq
\zeta\left[\sum\limits_{i=2}^{n}\sigma^{ii}k_{0}+\frac{k_{0}}{v^{2}}\right]-\zeta^{2}\left(\frac{\beta_{1}^{2}}{v^{2}}+\sum\limits_{i=2}^{n}\sigma^{ii}\beta_{i}^{2}\right).
\end{eqnarray*}

Set $J:=\sum\limits_{i,j=1}^{n}g^{ij}|D\omega|^{2}_{ij}$. By direct
calculation, one has
\begin{eqnarray*}
\begin{split}
J=&\sum\limits_{i,j=1}^{n}g^{ij}|D\omega|^{2}_{ij}\\
=&\sum\limits_{i,j,k=1}^{n}g^{ij}(2\omega^{k}_{j}\omega_{ki}+2\omega^{k}\omega_{ikj})\\
=&
\sum\limits_{i,j,k,l=1}^{n}g^{ij}\left[2\omega^{k}_{j}\omega_{ki}+2\omega^{k}(u_{ijk}+R^{l}_{ikj}u_{l}-G_{ikj})\right]\\
=&
2\sum\limits_{i,j,k=1}^{n}g^{ij}\omega^{k}(u_{ijk}+R^{1}_{ikj}u_{1}-G_{ikj})+2\sum\limits_{i,j,k=1}^{n}g^{ij}\omega^{k}_{j}\omega_{ki}\\
=&
-2\sum\limits_{i,j,k=1}^{n}g^{ij}\omega^{k}G_{ijk}+2\sum\limits_{i,j,k=1}^{n}g^{ij}\omega^{k}R^{1}_{ikj}u_{1}-2\sum\limits_{i,j,k=1}^{n}\omega^{k}(g^{ij})_{k}u_{ij}\\
&+2\sum\limits_{k=1}^{n}\omega^{k}(\varepsilon u_{k})+2\sum\limits_{i,j,k=1}^{n}g^{ij}\omega_{ki}\omega^{k}_{j}\\
\triangleq& J_{1}+J_{2}+J_{3}+J_{4}+J_{5}.
\end{split}
\end{eqnarray*}
Without loss of generality, one may assume that $u_{1}$ is big
enough, then
\begin{eqnarray*}
J_{4}=2\varepsilon v^{2}\geq0.
\end{eqnarray*}
Otherwise, the conclusion of Lemma \ref{lemma3.1} follows.

As in Lemma \ref{lemma2.2}, the other terms $J_{1}$, $J_{2}$,
$J_{3}$, $J_{5}$ can be controlled similarly. In (\ref{3.5}), taking
$0<\zeta<\min\{\frac{\lambda(n-1)k_{0}}{{\Lambda}},2\kappa_{1}\}$,
we can obtain
 \begin{eqnarray*}
   \label{3.7} v(x_{0})\leq c_{5}
\end{eqnarray*}
for some positive constant $c_{5}:=c_{5}(n,\Omega,L)$, which is
independent of $\varepsilon$. Then the assertion of Lemma
\ref{lemma3.1} follows.
\end{proof}

\begin{theorem}\label{main3.2}
Let $\Omega$ be a strictly convex bounded domain in $M^{n}$ with
$C^{3}$ boundary $\partial\Omega$, $n\geq2$. Assume that the Ricci
curvature of $M^{n}$ is nonnegative. For $\phi(x)\in
C^{3}(\overline{\Omega})$, there exists a unique
$\lambda\in\mathbb{R}$ and $\omega\in
C^{2,\alpha}(\overline{\Omega})$ solving the BVP ($\ddag$)
 Moreover, the solution $\omega$
is unique up to a constant.
 \end{theorem}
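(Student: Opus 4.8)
The plan is to prove Theorem \ref{main3.2} by the classical \emph{vanishing viscosity} (or "additive eigenvalue") method, using the auxiliary BVP (\ref{3.2-add}) as the regularization. First I would fix $\varepsilon>0$ and invoke standard elliptic theory to solve (\ref{3.2-add}): the equation is quasilinear uniformly elliptic once a gradient bound is known, so one runs the continuity method / Leray--Schauder fixed point argument, whose closure hypothesis is exactly the a priori estimates. The $C^0$ bound comes for free from the zeroth-order term: at an interior maximum of $u_\varepsilon$ the right-hand side of (\ref{3.2-add}) is $\leq 0$, so $\varepsilon u_\varepsilon \leq 0$ there, and testing the boundary condition together with the structure of $\Omega$ (or just the maximum principle applied on $\partial\Omega$ via $\beta$) gives $\sup|u_\varepsilon|\le C(n,\Omega,L)/\varepsilon$ — but what really matters is the \emph{oscillation} bound below. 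The gradient bound $\sup_{\overline\Omega}|Du_\varepsilon|\le c_4$, \emph{uniform in $\varepsilon$}, is precisely Lemma \ref{lemma3.1}. Feeding this back, the equation is uniformly elliptic with bounds independent of $\varepsilon$, so De Giorgi--Nash--Moser plus Schauder gives a uniform $C^{2,\alpha}(\overline\Omega)$ bound on $u_\varepsilon$ — here the $C^2$-boundary and the quasilinear oblique (Neumann) boundary condition let one apply the boundary Schauder estimates of Lieberman--Trudinger type.

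Next I would extract the eigenvalue. Set $\lambda_\varepsilon := \varepsilon u_\varepsilon(x_\varepsilon)$ for some fixed reference point, or more cleanly integrate the equation: using Remark~(I)'s identity, $\varepsilon u_\varepsilon = \mathrm{div}\!\left(Du_\varepsilon/\sqrt{1+|Du_\varepsilon|^2}\right)\sqrt{1+|Du_\varepsilon|^2}$, so dividing by $\sqrt{1+|Du_\varepsilon|^2}$ and applying the divergence theorem with the boundary condition gives
\begin{eqnarray*}
\varepsilon\int_\Omega \frac{u_\varepsilon}{\sqrt{1+|Du_\varepsilon|^2}} = -\int_{\partial\Omega}\frac{\phi}{\sqrt{1+|Du_\varepsilon|^2}},
\end{eqnarray*}
whose right side is bounded uniformly in $\varepsilon$ by $L$ and $|\partial\Omega|$; combined with the gradient bound this shows $\varepsilon \cdot \mathrm{osc}_{\overline\Omega} u_\varepsilon$ is controlled and that $\lambda_\varepsilon := \varepsilon\, u_\varepsilon(x_0)$ is bounded. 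I would then normalize: let $w_\varepsilon := u_\varepsilon - u_\varepsilon(x_0)$, so $\{w_\varepsilon\}$ is uniformly bounded in $C^{2,\alpha}(\overline\Omega)$ (the $C^0$ bound on $w_\varepsilon$ follows from the uniform gradient bound since $\Omega$ is bounded and connected) and $\{\lambda_\varepsilon\}$ is bounded in $\mathbb{R}$.

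Then I would pass to the limit: by Arzel\`a--Ascoli choose $\varepsilon_k\to 0$ with $\lambda_{\varepsilon_k}\to\lambda$ and $w_{\varepsilon_k}\to w$ in $C^2(\overline\Omega)$ (using the $C^{2,\alpha}$ bound to upgrade convergence). Since $\varepsilon_k u_{\varepsilon_k} = \varepsilon_k w_{\varepsilon_k} + \lambda_{\varepsilon_k} \to \lambda$ uniformly, the limit pair $(\lambda, w)$ satisfies the first equation of ($\ddag$) classically, and the Neumann condition $D_{\vec\nu}w = \phi$ passes to the limit by $C^1$ convergence; elliptic Schauder then bootstraps $w\in C^{2,\alpha}(\overline\Omega)$. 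For \textbf{uniqueness up to a constant} — which I expect to be the main obstacle — I would argue as follows. Suppose $(\lambda_1, w_1)$ and $(\lambda_2, w_2)$ both solve ($\ddag$). The operator $Q[w]=\mathrm{div}\big(Dw/\sqrt{1+|Dw|^2}\big)$ is degenerate, so one cannot directly subtract; instead I would first show $\lambda_1=\lambda_2$ via the integral identity of Remark~(I) — but that identity involves $w$ itself, so a cleaner route is a comparison/sweeping argument: the difference $w_1 - w_2 + c$ cannot have an interior extremum unless it is constant, by the strong maximum principle applied to the linear uniformly elliptic equation satisfied by $w_1-w_2$ (obtained by writing $Q[w_1]-Q[w_2]$ as $a^{ij}(x)D_{ij}(w_1-w_2)+b^i(x)D_i(w_1-w_2)$ via the mean value theorem along the segment joining $Dw_1, Dw_2$, with $(\lambda_1-\lambda_2)$ absorbed into the zeroth order); then the Hopf lemma together with $D_{\vec\nu}w_1 = D_{\vec\nu}w_2 = \phi$ on $\partial\Omega$ forces $w_1-w_2$ to be constant, which back-substituted gives $\lambda_1=\lambda_2$. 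The delicate point is verifying that the mean-value linearization has bounded measurable (indeed H\"older) coefficients and that the zeroth-order coefficient can be arranged to have the right sign (or be zero) so that both the strong maximum principle and Hopf's lemma apply at the boundary; this is where the strict convexity of $\Omega$ and the $C^3$ regularity of $\partial\Omega$ are used.
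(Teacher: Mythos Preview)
Your approach is essentially the vanishing-viscosity method the paper uses: solve (\ref{3.2-add}) with the uniform gradient bound from Lemma~\ref{lemma3.1}, normalize (the paper subtracts the mean over $\Omega$, you subtract a point value---either works since the gradient bound controls the oscillation), extract a subsequential limit $(\lambda,w)$, and prove uniqueness by linearizing and applying Hopf's lemma. Two minor corrections: the uniform $C^{2,\alpha}$ bound holds for the \emph{normalized} $w_\varepsilon$, not for $u_\varepsilon$ itself (whose $C^0$ norm is only $O(1/\varepsilon)$), so state your Schauder step after normalizing; and the linearized equation in the uniqueness step has \emph{no} zeroth-order term because the nonlinearity depends only on $Du$ and $D^2u$, so your closing concern about its sign is moot and strict convexity plays no role there---it enters only through Lemma~\ref{lemma3.1}.
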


\begin{proof}
We use a similar method to that of the proof of \cite[Theorem
1.2]{mww}.

 For each fixed $\varepsilon>0$, we firstly show the
existence of the solution to the BVP (\ref{3.2-add}). Based on the
$C^{1}$-estimate (see Lemma \ref{lemma3.1}), the only obstacle is to
derive a priori $C^{0}$-estimate for the solution
$u_{\varepsilon}(x)$ to the BVP (\ref{3.2-add}).

Let $f$ be a smooth function on $\overline{\Omega}$ satisfying
$D_{\vec{\nu}}f<-\sup_{\overline{\Omega}}|\phi(x)|$. Let $\rho$ be a
point where $f-u_{\varepsilon}$ achieves its minimum. Denote by $T$
the tangent vector to $\partial\Omega$. If $\rho\in\partial\Omega$,
then $D_{T}f(\rho)=D_{T}u_{\varepsilon}(\rho)$ and
$D_{\vec{\nu}}f(\rho)\geq
D_{\vec{\nu}}u_{\varepsilon}(\rho)=\phi(\rho)$, which is contradict
with the choice of $f$. So, $\rho\in\Omega$, and then
~$Df(\rho)=Du_{\varepsilon}(\rho)$ and $D^{2}f(\rho)\geq
D^{2}u_{\varepsilon}(\rho)$. This gives the existence of a constant
$c_{6}:=c_{6}(f)$ such that
 \begin{eqnarray*}
c_{6}\geq\sum\limits_{i,j=1}^{n}
g^{ij}(Df)f_{ij}(\rho)\geq\sum\limits_{i,j=1}^{n}
g^{ij}(Du_{\varepsilon})(u_{\varepsilon})_{ij}(\rho)=\varepsilon
u_{\varepsilon}(\rho)
 \end{eqnarray*}
Together with the fact $f(x)-u_{\varepsilon}(x)\geq
f(\rho)-u_{\varepsilon}(\rho)$ for $x\in\Omega$, we have
 \begin{eqnarray*}
\varepsilon u_{\varepsilon}(x)\leq\varepsilon f(x)-\varepsilon
f(\rho)+c_{6}.
 \end{eqnarray*}
 Similarly, one can get a lower bound for $\varepsilon
 u_{\varepsilon}(x)$. Therefore, $\sup_{\overline{\Omega}}|\varepsilon u_{\varepsilon}|\leq
 c_{7}$ holds for some nonnegative constant $c_{7}$. By the standard
 theory of second-order elliptic PDEs, one can get the existence of the solution
 to the BVP (\ref{3.2-add}).

Set
$\omega_{\varepsilon}:=u_{\varepsilon}-\frac{1}{|\Omega|}\int_{\Omega}u_{\varepsilon}dx$.
It is easy to check that $\omega_{\varepsilon}$ satisfies
\begin{eqnarray*}
 \left\{
\begin{array}{lll}
\sum\limits_{i,j=1}^{n}\left(\sigma^{ij}-\frac{(\omega_{\varepsilon})^{i}(\omega_{\varepsilon})^{j}}{1+|D\omega_{\varepsilon}|^{2}}\right)(\omega_{\varepsilon})_{ij}
=\varepsilon\omega_{\varepsilon}+\varepsilon\frac{1}{|\Omega|}\int\limits_{\Omega}u_{\varepsilon}dx \qquad \qquad &\mathrm{in} ~ \Omega,\\
D_{\vec{\nu}}\omega_{\varepsilon}=(\omega_{\varepsilon})_{\vec{\nu}}=\phi(x)
\qquad \qquad &\mathrm{on} ~
\partial \Omega.
\end{array}
\right.
\end{eqnarray*}

By
 \begin{eqnarray*}
 \sup\limits_{\overline{\Omega}}|D\omega_{\varepsilon}|=\sup\limits_{\overline{\Omega}}|Du_{\varepsilon}|\leq
 c_{4}
 \end{eqnarray*}
 (see Lemma \ref{lemma3.1})  and the fact that $\omega_{\varepsilon}$ has at least one zero
point, we have $|\omega_{\varepsilon}|\leq c_{8}$ for some
nonnegative constant $c_{8}:=c_{8}(c_{4},c_{7})$, which also gives
the boundedness of $\frac{1}{|\Omega|}\int_{\Omega} (\varepsilon
u_{\varepsilon})dx$. By the Schauder theory for second-order
elliptic PDEs, one has
$|\omega_{\varepsilon}|_{C^{2,\alpha}(\overline{\Omega})}\leq c_{9}$
for some nonnegative constant $c_{9}:=c_{9}(c_8)$. Taking
$\varepsilon\rightarrow0$, we have
$\omega_{\varepsilon}\rightarrow\omega$ and
$\varepsilon\omega_{\varepsilon}+\varepsilon\frac{1}{|\Omega|}\int_{\Omega}u_{\varepsilon}dx\rightarrow\lambda$,
where $(\lambda,\omega)$ solves the BVP ($\ddag$).

Assume that there exist two pairs $(\lambda_{1},u_{1})$~ and~
$(\lambda_{2},u_{2})$ solving the BVP ($\ddag$). Without loss of
generality, we may assume that $\lambda_{1}\leq \lambda_{2}$. Let
$\omega=u_{1}-u_{2}$, and by the linearization process for the
quasilinear elliptic PDEs, it is clear that $\omega$ satisfies
\begin{eqnarray}
 \left\{
\begin{array}{lll}
\sum\limits_{i,j=1}^{n}\widetilde{g}^{ij}\omega_{ij}+\sum\limits_{i}^{n}b_{i}\omega_{i}=\lambda_{1}-\lambda_{2}\leq0 ~ \qquad \qquad &\mathrm{in} ~ \Omega,\\
D_{\vec{\nu}}\omega=0 \qquad \qquad &\mathrm{on} ~
\partial \Omega,
\end{array}
\right.
\end{eqnarray}
where $\widetilde{g}^{ij}=g^{ij}(Du_{1})$ and ~
$b_{i}=\sum\limits_{k,l=1}^{n}(u_{2})_{kl}\int_{0}^{1}g^{kl}_{,p_{i}}(\eta
Du_{1}+(1-\eta)Du_{2})d\eta$. By Hopf's lemma, $\omega$ must be a
constant, which gives the uniqueness (up to a constant) of the
solution to the BVP ($\ddag$). Consequently, we have
$\lambda_{1}=\lambda_{2}$. This completes the proof of Theorem
\ref{main3.2}.
\end{proof}

Let
\begin{eqnarray}
\begin{array}{lll}
\widetilde{\omega}(x,t):=\omega+\lambda t,
\end{array}
\end{eqnarray}
where $(\lambda,\omega)$ is the solution to the BVP ($\ddag$). It's
easy to check that $\widetilde{\omega}$ solves the following IBVP
\begin{eqnarray}
\left\{
\begin{array}{lll}
{u_{t}=\sum\limits_{i,j=1}^{n}\left(\sigma^{ij}-\frac{D^{i}uD^{j}u}{1+|Du|^{2}}\right)D_{i}D_{j}u} \qquad \qquad &\mathrm{on} ~ \Omega\times(0,\infty),\\
D_{\vec{\nu}}u=\phi(x) \qquad \qquad &\mathrm{on} ~
\partial \Omega\times(0,\infty),\\
u(x,0)=\omega(x)\qquad \qquad &\mathrm{on} ~\Omega.
\end{array}
\right.
\end{eqnarray}
As mentioned at the end of Section 2, by Lemmas \ref{lemma2.1} and
\ref{lemma2.2}, the Schauder theory for parabolic PDEs, one can
obtain the long-time existence for the IBVP  $(\sharp)$, i.e.,
$T=\infty$.

\begin{corollary}\label{main3.3}
For a solution $u=u(x,t)$ to the IBVP  $(\sharp)$, there exists a
positive constant $c_{10}$, independent of $t$, such that
\begin{eqnarray*}
|u(x,t)-\lambda t|\leq c_{10}.
\end{eqnarray*}
\end{corollary}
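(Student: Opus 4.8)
The plan is to prove Corollary \ref{main3.3} by comparing the solution $u(x,t)$ of the IBVP $(\sharp)$ with the explicit translating solution $\widetilde{\omega}(x,t)=\omega(x)+\lambda t$ constructed just above, using the maximum principle for the difference. Set $\psi(x,t):=u(x,t)-\widetilde{\omega}(x,t)=u(x,t)-\omega(x)-\lambda t$. Since both $u$ and $\widetilde{\omega}$ satisfy the same quasilinear parabolic equation with the same (time-independent) Neumann data $\phi(x)$ on $\partial\Omega$, the function $\psi$ satisfies a \emph{homogeneous} linear parabolic equation obtained by linearizing the operator along the segment joining $Du$ and $D\widetilde{\omega}$, together with the homogeneous Neumann boundary condition $D_{\vec\nu}\psi=0$ on $\partial\Omega\times(0,\infty)$. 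Concretely, writing $F(p,r)=\sum_{i,j}\bigl(\sigma^{ij}-\tfrac{p^ip^j}{1+|p|^2}\bigr)r_{ij}$, one has by the fundamental theorem of calculus $\psi_t=\sum_{i,j}a^{ij}(x,t)\,D_iD_j\psi+\sum_i b^i(x,t)\,D_i\psi$ where $a^{ij}=g^{ij}(\eta Du+(1-\eta)D\widetilde\omega)$ integrated in $\eta\in[0,1]$ is uniformly elliptic (thanks to the gradient bounds of Lemmas \ref{lemma2.1}–\ref{lemma2.2} applied to $u$, and the $C^{2,\alpha}$ bound on $\omega$ from Theorem \ref{main3.2}), and $b^i$ involves the second derivatives of $\widetilde\omega=\omega$, which are bounded.

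Next I would run the maximum principle on $\psi$. On the spatial boundary $\partial\Omega\times(0,\infty)$ the homogeneous Neumann condition $D_{\vec\nu}\psi=0$ together with the Hopf lemma rules out an interior-in-time boundary maximum or minimum of $\psi$ strictly exceeding its initial values: if $\psi$ attained a new maximum at $(\xi,\tau)\in\partial\Omega\times(0,\infty)$ that exceeds $\sup_\Omega\psi(\cdot,0)$, then Hopf's lemma forces $D_{\vec\nu}\psi(\xi,\tau)<0$, contradicting $D_{\vec\nu}\psi\equiv0$; similarly for the minimum. Combined with the weak maximum principle on $\overline\Omega\times[0,T']$ for arbitrary $T'$, this yields
\begin{eqnarray*}
\sup_{\overline\Omega\times[0,\infty)}|\psi|=\sup_{\Omega}|\psi(\cdot,0)|=\sup_\Omega|u_0(x)-\omega(x)|=:c_{10},
\end{eqnarray*}
which is finite and independent of $t$. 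Unwinding the definition of $\psi$ gives $|u(x,t)-\lambda t-\omega(x)|\le c_{10}$, and since $\omega\in C^{2,\alpha}(\overline\Omega)$ is bounded on the compact set $\overline\Omega$, we absorb $\|\omega\|_{C^0}$ into the constant to obtain $|u(x,t)-\lambda t|\le c_{10}$ (after renaming the constant), as claimed.

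The only subtlety — and the step I would be most careful about — is justifying that the linearized coefficients $a^{ij}$ are \emph{uniformly} elliptic on $\overline\Omega\times[0,\infty)$, i.e. that the ellipticity constants do not degenerate as $t\to\infty$. This is where the time-uniform gradient estimate of Lemma \ref{lemma2.2} (bounding $|Du|$ by $c_1$ independent of $T$) and the $C^{2,\alpha}$-bound on $\omega$ are essential: along the segment $\eta Du+(1-\eta)D\omega$ the gradient stays bounded by $\max\{c_1,\|D\omega\|_{C^0}\}$, so the smallest eigenvalue of $(a^{ij})$ is bounded below by a positive constant depending only on this bound, $n$, and $\sigma$. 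With uniform ellipticity in hand, the maximum-principle/Hopf-lemma argument above applies verbatim on each slab $\overline\Omega\times[0,T']$ with constants independent of $T'$, and letting $T'\to\infty$ finishes the proof. A secondary point worth a sentence is that $u(\cdot,0)=u_0$ here refers to the original IBVP $(\sharp)$ with arbitrary smooth initial datum $u_0$, so $c_{10}$ depends on $\|u_0-\omega\|_{C^0(\overline\Omega)}$; if instead one starts the flow from $u(x,0)=\omega(x)$ as in the displayed IBVP just above the corollary, then $\psi\equiv0$ and the bound is trivial, but the stated corollary is the general one and the comparison argument handles it uniformly.
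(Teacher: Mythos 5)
Your proof is correct and takes essentially the same route as the paper: set $\psi=u-\widetilde\omega$ with $\widetilde\omega=\omega+\lambda t$, linearize to a uniformly elliptic parabolic equation with homogeneous Neumann data, and invoke the weak maximum principle together with Hopf's lemma at the boundary to conclude $\sup|\psi|=\sup_\Omega|u_0-\omega|$. One bookkeeping note: if you put the $\eta$-integration into the principal coefficients $a^{ij}=\int_0^1 g^{ij}(\eta Du+(1-\eta)D\omega)\,d\eta$, then the matching first-order coefficient $b^i$ must carry $\int_0^1 g^{kl}_{,p_i}(\eta Du+(1-\eta)D\omega)\bigl(\eta u+(1-\eta)\omega\bigr)_{kl}\,d\eta$ (so it involves $D^2u$ as well), whereas the paper instead freezes $\widetilde g^{ij}=g^{ij}(Du)$ and places the $\eta$-integral in $b_i$ with coefficient $\omega_{kl}$ — either form is fine, since the maximum principle needs only uniform ellipticity and boundedness of the lower-order terms on each finite time slab.
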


\begin{proof}
Set $z(x,t):=u(x,t)-\widetilde{\omega}(x,t)$. By the linearization
process, it is easy to check that $z(x,t)$ satisfies
\begin{eqnarray*}
 \left\{
\begin{array}{lll}
z_{t}=\sum\limits_{i,j=1}^{n}\widetilde{g}^{ij}z_{ij}+\sum\limits_{i=1}^{n}b_{i}z_{i} ~ \qquad \qquad &\mathrm{in} ~ \Omega\times(0,\infty),\\
D_{\vec{\nu}}z=0 \qquad \qquad &\mathrm{on} ~
\partial \Omega\times(0,\infty),\\
z(x,0)=u_{0}(x)-\omega(x)~ \qquad \qquad &\mathrm{on} ~ \Omega,
\end{array}
\right.
\end{eqnarray*}
where $\widetilde{g}^{ij}=g^{ij}(Du)$ and
$b_{i}=\sum\limits_{k,l=1}^{n}(\widetilde{\omega})_{kl}\int_{0}^{1}g^{kl}_{,p_{i}}(\eta
Du+(1-\eta)D\widetilde{\omega})d\eta$. By the maximum principle of
second-order parabolic PDEs, we know that $z$ attains its maximum
and minimum on $\Omega\times\{0\}$. Hence, one has
\begin{eqnarray*}
\sup\limits_{\Omega\times(0,\infty)}|u-\lambda t|\leq
\sup\limits_{\Omega}|\omega|+\sup\limits_{\Omega}|u_{0}-\omega|,
\end{eqnarray*}
which implies the conclusion of Corollary \ref{main3.3}.
\end{proof}

\begin{lemma}\label{main3.4}
Let $u_{1}$ and $u_{2}$ be any two solutions to the IBVP $(\sharp)$
with initial data $u_{0,1}$~and~$u_{0,2}$ respectively. Let
$u=u_{1}-u_{2}$, then $u$ converges to a constant function as $t
\rightarrow\infty$. In particular, the limit of any solution to the
IBVP $(\sharp)$ is $\widetilde{\omega}$ up to a constant.
\end{lemma}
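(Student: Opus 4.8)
The plan is to exploit the linear parabolic equation satisfied by the difference $z=u_{1}-u_{2}$. Exactly as in the linearization used in the proof of Corollary~\ref{main3.3}, $z$ solves
\[
z_{t}=\sum_{i,j=1}^{n}\widetilde{g}^{ij}z_{ij}+\sum_{i=1}^{n}b_{i}z_{i}\quad\text{in }\Omega\times(0,\infty),\qquad D_{\vec{\nu}}z=0\quad\text{on }\partial\Omega\times(0,\infty),
\]
where $\widetilde{g}^{ij}=g^{ij}(Du_{1})$ and $b_{i}=\sum_{k,l=1}^{n}(u_{2})_{kl}\int_{0}^{1}g^{kl}_{,p_{i}}(\eta Du_{1}+(1-\eta)Du_{2})\,d\eta$. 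By Lemmas~\ref{lemma2.1} and~\ref{lemma2.2}, together with the higher-order estimates recorded at the end of Section~2, the coefficients $\widetilde{g}^{ij}$ and $b_{i}$ are bounded uniformly in $t$ and the operator is uniformly parabolic. Since this equation has no zeroth-order term and the boundary condition is the homogeneous Neumann one, the weak maximum principle shows that $\max_{\overline{\Omega}}z(\cdot,t)$ is non-increasing and $\min_{\overline{\Omega}}z(\cdot,t)$ is non-decreasing in $t$; in particular $z$ is uniformly bounded on $\overline{\Omega}\times[0,\infty)$ and the spatial oscillation $\theta(t):=\max_{\overline{\Omega}}z(\cdot,t)-\min_{\overline{\Omega}}z(\cdot,t)$ is non-increasing, hence $\theta(t)\to\theta_{\infty}\geq 0$ as $t\to\infty$.

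Next I would show that $\theta_{\infty}=0$ by passing to an eternal limit. The $C^{0}$-bound on $z$ just obtained, together with the uniform-in-$t$ bounds on all derivatives of $u_{1}$ and $u_{2}$ (again by Lemmas~\ref{lemma2.1}, \ref{lemma2.2} and the Schauder estimates for parabolic PDEs), gives uniform-in-$t$ bounds for $z$ in every parabolic $C^{k}$-norm on $\overline{\Omega}\times[0,\infty)$. Hence, for any sequence $t_{k}\to\infty$, a subsequence of the time-translates $z(\cdot,\cdot+t_{k})$ converges in $C^{2,1}_{\mathrm{loc}}(\overline{\Omega}\times\mathbb{R})$ to a function $z_{\infty}$ that solves a linear uniformly parabolic equation on $\overline{\Omega}\times\mathbb{R}$ with homogeneous Neumann boundary data; by the monotone convergence of $\theta$, both $\max_{\overline{\Omega}}z_{\infty}(\cdot,t)$ and $\min_{\overline{\Omega}}z_{\infty}(\cdot,t)$ are independent of $t$, so the spatial oscillation of $z_{\infty}(\cdot,t)$ equals $\theta_{\infty}$ for every $t\in\mathbb{R}$. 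Assume $\theta_{\infty}>0$, so that $z_{\infty}$ is non-constant, and put $M:=\max_{\overline{\Omega}}z_{\infty}(\cdot,0)$. The non-negative function $M-z_{\infty}$ solves the same (zeroth-order-free) limit equation; if the spatial maximum of $z_{\infty}(\cdot,0)$ is attained at an interior point of $\Omega$, the strong maximum principle forces $z_{\infty}\equiv M$ on $\overline{\Omega}\times(-\infty,0]$, contradicting $\theta_{\infty}>0$; if it is attained only on $\partial\Omega$, the Hopf boundary-point lemma forces $D_{\vec{\nu}}z_{\infty}\neq 0$ at that point, contradicting the boundary condition. In either case $\theta_{\infty}=0$.

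Therefore $\theta(t)\to 0$; since $\min_{\overline{\Omega}}z(\cdot,t)$ is non-decreasing and bounded above, it tends to some constant $c$, whence $\|z(\cdot,t)-c\|_{C^{0}(\overline{\Omega})}\to 0$, i.e. $z=u_{1}-u_{2}$ converges to a constant function as $t\to\infty$. For the final assertion, recall from just before Corollary~\ref{main3.3} that $\widetilde{\omega}(x,t)=\omega(x)+\lambda t$ is itself a solution of $(\sharp)$ (with initial datum $\omega$, where $(\lambda,\omega)$ solves $(\ddag)$; see Theorem~\ref{main3.2}); taking $u_{2}=\widetilde{\omega}$ in what has just been proved shows that any solution $u_{1}$ of $(\sharp)$ satisfies $\|u_{1}(\cdot,t)-\widetilde{\omega}(\cdot,t)-c\|_{C^{0}(\overline{\Omega})}\to 0$ for some constant $c$, i.e. the limit of any solution is $\widetilde{\omega}$ up to an additive constant. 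Writing $w:=\omega+c$, this is precisely statement (2) of Theorem~\ref{theorem1-1}.

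The step I expect to be the main obstacle is ruling out $\theta_{\infty}>0$: the monotonicity of the oscillation is essentially free, but upgrading it to genuine decay to zero requires the compactness of the time-translates — hence the uniform higher-order estimates — together with a careful application of the strong maximum principle and the Hopf lemma to the resulting eternal limit solution, in particular handling the case in which the spatial extrema of $z_{\infty}$ are attained on $\partial\Omega$.
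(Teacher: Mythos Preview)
Your proposal is correct and follows essentially the same approach as the paper: linearize the difference, use the maximum principle to get monotonicity of the oscillation, pass to an eternal limit along time-translates via the uniform a priori estimates, and apply the strong maximum principle together with the Hopf lemma to force the limiting oscillation to vanish. The only cosmetic difference is that the paper shifts $u_{1}$ and $u_{2}$ separately by $\lambda t_{n}$ (invoking Corollary~\ref{main3.3}) before extracting limits, whereas you translate $z$ directly and rely on the monotonicity of $\max z$ and $\min z$ for the $C^{0}$-bound; both routes yield the same contradiction.
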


\begin{proof}
We use a similar method to that of the proof of \cite[Lemma
2.5]{mww}.

 As shown in Corollary \ref{main3.3}, it is easy to know that $u$
 satisfies
\begin{eqnarray}
 \left\{
\begin{array}{lll}
z_{t}=\sum\limits_{i,j=1}^{n}\widetilde{g}^{ij}z_{ij}+\sum\limits_{i}^{n}b_{i}z_{i} ~ \qquad \qquad &\mathrm{in} ~ \Omega\times(0,\infty),\\
z_{\nu}=0 \qquad \qquad &\mathrm{on} ~
\partial \Omega\times(0,\infty),\\
z(x,0)=u_{0,1}(x)-u_{0,2}(x)~ \qquad \qquad &\mathrm{on} ~ \Omega,
\end{array}
\right.
\end{eqnarray}
where $\widetilde{g}^{ij}=g^{ij}(Du_{1})$ and
$b_{i}=\sum\limits_{k,l=1}^{n}(u_{2})_{kl}\int_{0}^{1}g^{kl}_{,p_{i}}(\eta
Du_{1}+(1-\eta)Du_{2})d\eta$. Set
\begin{eqnarray*}
\mathrm{osc}(u)(t)=\max\limits_{\Omega}u(x,t)-\min\limits_{\Omega}u(x,t).
\end{eqnarray*}
By the strong maximum principle of second-order parabolic PDEs and
Hopf's lemma, one knows that $\mathrm{osc}(u)(t)$ is a strictly
decreasing function unless $u$ is a constant.

Now, we \textbf{claim} that
\begin{eqnarray*}
\lim\limits_{t\rightarrow\infty}\mathrm{osc}(u)(t)=0.
\end{eqnarray*}
Otherwise, one has
$\lim\limits_{t\rightarrow\infty}\mathrm{osc}(u)(t)=\chi$ for some
 $\chi>0$. In fact, given a sequence
$t_{n}\rightarrow+\infty$, define
\begin{eqnarray*}
u_{1,n}(\cdot,t):=u_{1}(\cdot,t+t_{n})-\lambda t_{n}
\end{eqnarray*}
and
\begin{eqnarray*}
u_{2,n}(\cdot,t):=u_{2}(\cdot,t+t_{n})-\lambda t_{n}
\end{eqnarray*}
By Corollary \ref{main3.3}, for $i=1,2$, we have $|u_{i,n}-\lambda
t|\leq c_{10}$. By Lemmas \ref{lemma2.1}, \ref{lemma2.2}, and the
Schauder theory of second-order parabolic PDEs, it follows that for
any $k$, $u_{1,n}(\cdot,t)$ and $u_{2,n}(\cdot,t)$ are locally (in
time) $C^{k}$ uniformly bounded with respect to $n$. Therefore,
there exists a subsequence (still denoted by $t_{n}$) such that
$u_{1,n}(\cdot,t)$ and $u_{2,n}(\cdot,t)$ converge locally uniformly
in any $C^{k}$ to $u_{1}^{*}(\cdot,t)$ and $u_{2}^{*}(\cdot,t)$
respectively, i.e.,
\begin{eqnarray*}
u_{1}^{*}(\cdot,t)=\lim \limits
_{n\rightarrow\infty}u_{1,n}(\cdot,t), \qquad
u_{2}^{*}(\cdot,t)=\lim \limits
_{n\rightarrow\infty}u_{2,n}(\cdot,t).
\end{eqnarray*}
Set $u^{*}:=u_{1}^{*}-u_{2}^{*}$, and then we have
\begin{eqnarray} \label{3.8}
\mathrm{osc}(u^{*})(t)&=&\mathrm{osc}(u_{1}^{*}-u_{2}^{*})\nonumber\\
&=&\lim \limits _{n\rightarrow\infty}\mathrm{osc}\left(u_{1}(x,t+t_{n})-\lambda t_{n}-u_{2}(x,t+t_{n})+\lambda t_{n}\right)\nonumber\\
&=&\lim \limits _{n\rightarrow\infty}\mathrm{osc}(u_{1}(x,t+t_{n})-u_{2}(x,t+t_{n}))\nonumber\\
&=&\lim \limits
_{n\rightarrow\infty}\mathrm{osc}(u)(t+t_{n})\nonumber\\
&=&\chi.
\end{eqnarray}
The second equality in (\ref{3.8}) holds since $u_{1,n}(\cdot,t)$
and $u_{1,n}(\cdot,t)$ are uniformly convergent.

Besides, it is easy to check that $u^{*}$ satisfies
\begin{eqnarray*}
 \left\{
\begin{array}{lll}
z_{t}=\sum\limits_{i,j=1}^{n}\widetilde{g}^{ij}z_{ij}+\sum\limits_{i=1}^{n}b_{i}z_{i} ~ \qquad \qquad &\mathrm{in} ~ \Omega\times(-\infty,\infty),\\
D_{\vec{\nu}}z=0 \qquad \qquad &\mathrm{on} ~
\partial \Omega\times(-\infty,\infty),
\end{array}
\right.
\end{eqnarray*}
where $\widetilde{g}^{ij}=g^{ij}(Du_{1}^{*})$
 and $b_{i}=\sum\limits_{k,l=1}^{n}(u_{2}^{*})_{kl}\int_{0}^{1}g^{kl}_{,p_{i}}(\eta
Du_{1}^{*}+(1-\eta)Du_{2}^{*})d\eta$. By the strong maximum
principle of second-order parabolic PDEs and Hopf's lemma, we know
$u^{*}$ is a constant. This is contradict with
$\mathrm{osc}(u^{*})(t)\equiv \chi$. Our \textbf{claim} follows. So,
one has $\lim \limits_{t\rightarrow\infty}\max_{\Omega}u=\lim
\limits_{t\rightarrow\infty}\min_{\Omega}u=c_{11}$ for some constant
$c_{11}$, which implies $\lim
\limits_{t\rightarrow\infty}|u-c_{11}|=0$. This finishes the proof
of Lemma \ref{main3.4}.
\end{proof}

Clearly, by Corollary \ref{main3.3} and Lemma \ref{main3.4}, we know
that the limit of any solution to the IBVP $(\sharp)$ is
$\widetilde{w}=w+\lambda t$ up to a constant, where
$(\lambda,\omega)$ is the solution to the BVP ($\ddag$). This
completes the proof of (2) of Theorem \ref{theorem1-1}.

\vspace{5mm}

\section*{Acknowledgments}
\renewcommand{\thesection}{\arabic{section}}
\renewcommand{\theequation}{\thesection.\arabic{equation}}
\setcounter{equation}{0} \setcounter{maintheorem}{0}

This research was supported in part by the NSF of China (Grant Nos.
11801496 and 11926352), the Fok Ying-Tung Education Foundation
(China), and Hubei Key Laboratory of Applied Mathematics (Hubei
University).

 \end{document}